\documentclass[onefignum,onetabnum]{siamart220329}
\usepackage{amssymb}
\usepackage{enumitem}
\usepackage{booktabs}
\usepackage{xcolor}

\newtheorem{remark}[theorem]{Remark}
\newtheorem{example}{Example}[section]
\newcommand{\R}{\mathbb{R}}

\newcommand{\bx}{\boldsymbol{x}}

\headers{GPU-Accelerated Direct Solver for the Schr\"odinger Operator}{X.\ Liu and X.\ Zhang}

\title{A Simple GPU-Accelerated Solver for the Schr\"odinger Operator
  with Applications to Ground States and Hamiltonian Simulation}

\author{Xinyu Liu\thanks{Department of Mathematics,
  The Ohio State University, 231 West 18th Avenue, Columbus,
  OH 43210 (\email{liu.12165@osu.edu}).}
\and Xiangxiong Zhang\thanks{Corresponding author. Department of Mathematics,
  Purdue University, 150 N.\ University Street, West Lafayette,
  IN 47907 (\email{zhan1966@purdue.edu}).}}

\begin{document}
\maketitle

\begin{abstract}
We extend the tensor-product direct solver from the Laplacian to
the Schr\"odinger operator $-\Delta + V$.
When the potential $V_1$ is separable, the operator
$-\Delta + V_1$ is inverted or exponentiated at cost $O(N^{1+1/d})$
in $d$ dimensions via per-axis eigendecomposition.
On a single NVIDIA A100 GPU, this costs less than one second
for $10^9$ degrees of freedom  in 3D.
For non-separable potentials $V = V_1 + V_2$, the same solver
provides a preconditioner $(-\Delta + V_1)^{-1}$ for the
preconditioned conjugate gradient (PCG) method
and a propagator for operator-splitting time integrators.
For bounded $V_2$, we prove that the preconditioned operator
has a bounded condition number and a clustered spectrum with at most finitely many outlier
eigenvalues, independently of the mesh size, and also independently
of the domain size when $V_1$ is a confining potential.
This explains the mesh- and domain-independent PCG iteration
counts observed in practice.
We apply this method to ground state computation
via inverse iteration for linear problems and via the
$a_u$ gradient flow for Gross--Pitaevskii energy in 3D, 
and also Hamiltonian simulation via the approximated qHOP and Magnus-2
splitting methods from 3D to 9D on a single NVIDIA GH200 GPU.
\end{abstract}

\begin{keywords}
tensor-product spectral method, GPU computing, Schr\"odinger
equation, Gross--Pitaevskii equation, Strang splitting, Yoshida splitting
\end{keywords}

\begin{AMS}
65M70, 65N35, 65F15, 81-08, 35Q55, 81Q05
\end{AMS}

\section{Introduction}
\label{sec:intro}

It is well known that the discrete Laplacian on Cartesian meshes
has a tensor-product structure, which can be used as a simple and
fast direct solver since the
1960s~\cite{lynch1964tensor,HAIDVOGEL1979167,shen1994,kwan2007efficient,ZhangShenLiu}.
For a general potential $V$, this structure is lost for the
Schr\"odinger operator $-\Delta + V$.
In this paper, we extend the tensor-product solver from the
Laplacian to the Schr\"odinger operator $-\Delta + V$ as follows.
For a separable potential $V_1(\boldsymbol{x}) = \sum_k f_k(x_k)$,
the operator $-\Delta + V_1$ can still be inverted or exponentiated via
per-axis eigendecomposition at cost $O(N^ {1+1/d})$ with $O(N)$
memory.
For non-separable potentials $V = V_1 + V_2$, the tensor-product
solver provides a preconditioner $(-\Delta + V_1)^{-1}$ for the preconditioned conjugate gradient (PCG) method
and a propagator for operator-splitting time integrators.
For bounded $V_2$, we prove that the preconditioned operator $(-\Delta + V_1)^{-1}(-\Delta+V_1+V_2)$
has a bounded condition number and a clustered spectrum with at most finitely many outliers,
independently of the mesh size (Theorem~\ref{thm:pcg}), and also
independently of the domain size when $V_1$ is a confining
potential (Theorem~\ref{cor:confining}).
This can explain mesh- and domain-independent PCG iteration counts observed in numerical tests for suitable potentials.
Similar to the simple GPU acceleration of the direct inversion of
the Laplacian in~\cite{ZhangShenLiu}, we show
that the Schr\"odinger operator $-\Delta + V$ can be easily
inverted on GPU
(Sections~\ref{sec:solver}--\ref{sec:gpe}), e.g., it takes less than 1 second for inverting $-\Delta + V_1$ for one billion DoFs on one Nvidia GPU such as A100 and GH200.

Fast inversion of the Schr\"odinger operator has several
applications. We first consider ground state
computation (Section~\ref{sec:gpe_app}): for separable potentials via shifted
inverse iteration, and for non-separable potentials
via shifted inverse iteration with PCG, preconditioned by
$(-\Delta + V_1)^{-1}$.
We also demonstrate that the defocusing Gross--Pitaevskii
ground state via the $a_u$ flow can be easily implemented in 3D
on a single GPU.

Another application is Hamiltonian simulation.
If $(\lambda_1, u_1)$ is the ground state eigenpair of
$H = -\Delta + V$, then
$\psi(x,t) = e^{-i\lambda_1 t}\,u_1(x)$ is an exact solution of the
time-dependent Schr\"odinger equation $i\partial_t \psi = H\psi$.
We use this stationary solution as a reference for testing
operator-splitting methods
(Sections~\ref{sec:hamiltonian}--\ref{sec:multibody}).

In the quantum computing literature, two recent splitting methods
have been considered for simulating
$i\partial_t \psi = (A + B)\psi$ in the interaction picture,
where $A$ has large spectral norm but can be fast-forwarded
(e.g., $A = -\Delta$ via QFT).
The \emph{quantum Highly Oscillatory Protocol}
(qHOP)~\cite{AnFangLin2022} approximates the first-order Magnus
expansion and achieves $O(\Delta t^2)$ superconvergence with an
error preconstant independent of problem size $n$~\cite{AnFangLin2022}.
The \emph{Magnus-2} algorithm~\cite{FangLiuSarkar2025} uses the
second-order Magnus expansion and achieves $O(\Delta t^4)$
superconvergence~\cite{FangLiuSarkar2025,BornsWeilFangZhang2026}.
On a quantum computer, the matrix exponential of a sum of
interaction-picture Hamiltonians is implemented via
linear combination of unitaries (LCU).
Classically, this cannot be computed directly at large $n$.
We implement an approximate version using a product formula
where each factor is computed exactly via tensor-product
propagations and a pointwise multiplication
(equation~\eqref{eq:qhop_product} and
Remark~\ref{rem:product_order} in
Section~\ref{sec:ham_separable}).
We test these methods for multi-body Coulomb Hamiltonians
in 4D, 6D, and 9D (Section~\ref{sec:multibody}).
The approximated versions of qHOP and Magnus-2 are tested in
multiple dimensions for two purposes.
First, these tests can serve as a verification of their theoretical superconvergence orders
in high dimensions.
Since the approximated versions introduce additional
product-formula errors beyond the original splitting error,
observing the predicted convergence orders despite these
additional errors provides supporting numerical evidence for
the original schemes. 
Second, the same product-formula implementation yields a family
of classical splitting methods with parameter $M$: $M = 1$ recovers the classical
Strang and Yoshida splittings, while $M \ge 3$ reduces the
error constant at the same convergence order, demonstrating
that qHOP and Magnus-2 can serve as improved classical
integrators compared to standard schemes.
Furthermore, as a classical numerical scheme, the splitting $A = -\Delta + V_1$, $B = V_2$
gives   smaller errors than
$A = -\Delta$, $B = V_1 + V_2$
(Tables~\ref{tab:splitting_nonsep}
and~\ref{tab:magnus2_nonsep}
in Section~\ref{sec:hamiltonian}).

In related work, the tensor-product approach to Schr\"odinger
equations has been explored by Caliari et al.~\cite{caliari2022},
whose $\mu$-mode
integrator exploits Kronecker structure for 3D problems at
${\sim}2\times 10^6$ DoFs.
GPU acceleration for  Schr\"odinger  simulation has also been explored
in~\cite{gainullin2015}.
The split-step spectral method was introduced in~\cite{bao2002}.
We refer to~\cite{lasser2020,jin2011} for reviews of
semiclassical methods and
to~\cite{jahnke2000,thalhammer2008,hochbruck2003} for splitting
error analysis and classical Magnus integrators. 
On the quantum computing side, gate complexity for
real-space simulation of Schr\"odinger
equations was established
in~\cite{ChildsLengLiLiuZhang2022}, Trotter error bounds for
interacting electrons were given
in~\cite{SuHuangCampbell2021,hahn2024}, and quantum resource
estimates for 2D Coulomb simulation are given
in~\cite{coulomb2026}.

All results in this paper can be reproduced via Python (JAX) code
available at \url{https://github.com/zhan1966/schrodinger}.
GPU cards are available to many researchers through programs such
as the NSF-funded ACCESS (Advanced Cyberinfrastructure Coordination
Ecosystem: Services \& Support)~\cite{ACCESS2023}, which provides
allocations on GPU clusters including NVIDIA A100 and GH200 nodes.
The direct inversion of the tensor-structured Laplacian on A100
was demonstrated
in~\cite{ZhangShenLiu,ChenLuLuZhang2024,HaoLeeZhang2025}.
This paper extends the same approach to the Schr\"odinger operator
$-\Delta + V$.
All computations in this paper require only a single GPU. 

The rest of the paper is organized as follows.
Section~\ref{sec:solver} describes the tensor-product spectral
solver.
Section~\ref{sec:gpe} develops the PCG preconditioner and
analyzes its mesh- and domain-independent condition number and spectral clustering.
Section~\ref{sec:gpe_app} presents ground state computation
via shifted inverse iteration and Gross--Pitaevskii gradient flows.
Sections~\ref{sec:hamiltonian} and~\ref{sec:multibody} present
Hamiltonian simulation results in 3D through 9D.
Some concluding remarks are given in Section~\ref{sec:conclusion}.
Appendix~\ref{app:hermite} presents the Hermite spectral
discretization on unbounded domains.

\section{Direct solver for $-\Delta + V$ with a separable potential}
\label{sec:solver}

The idea in this paper applies to any numerical method with a tensor-product
structure on rectangular domains.
For simplicity, throughout the paper we focus on the $Q^k$ spectral element
method (SEM), which is the high order finite element method with $Q^k$ basis and $(k+1)$-point Gauss-Lobatto quadrature. 
For $k=1$, it is the classical second order finite difference method.
For $k\geq 2$, $Q^k$ SEM can be proven to be a $(k+2)$-th order finite difference scheme in $\ell^2$-norm over quadrature point values 
~\cite{LiAppeloZhang2022}.
As an example of another suitable method, we also include a
Hermite spectral method on unbounded domains in
Appendix~\ref{app:hermite}.
See also~\cite{ZhangShenLiu} for using   $Q^k$ SEM in a Poisson solver.

\subsection{A direct solver via eigendecomposition for separable potentials}

We consider the elliptic PDE
\begin{equation}\label{eq:main}
  -\Delta u(\boldsymbol{x}) + V(\boldsymbol{x})u(\boldsymbol{x})
  = b(\boldsymbol{x}), \quad \boldsymbol{x} \in \Omega,
\end{equation}
on a rectangular domain $\Omega = [-L,L]^d$ (or on $\R^d$ via
Hermite functions in Appendix~\ref{app:hermite}).
For simplicity, we consider the homogeneous Dirichlet boundary
condition, and extensions of the scheme to homogeneous Neumann and periodic
boundary conditions are straightforward.
With the $Q^k$ spectral element method using Gauss--Lobatto
quadrature~\cite{LiAppeloZhang2022}, let $K_d = M_d^{-1}S_d$ denote the 1D
discrete Laplacian in direction $d \in \{x,y,z\}$,
with $M_d$ the mass matrix, $S_d$ the stiffness matrix,
and $I_d$ the identity matrix.
For $Q^1$ elements on a uniform mesh, $K_d$
is the well-known tridiagonal $(-1,2,-1)$ matrix from
second order finite difference.
Let $n$ be the number of interior grid points in each direction.
For any $d$-dimensional array $X$ of size $n \times \cdots \times n$,
let $\mathrm{vec}(X)$ denote the vector of size $n^d$ obtained by
reshaping all entries of $X$ into a column vector.
Then in 3D, the discretized system
takes the form (see~\cite{ZhangShenLiu,ChenLuLuZhang2024}):
\begin{equation}\label{eq:general_system}
  \bigl[I_z \otimes I_y \otimes K_x + I_z \otimes K_y \otimes I_x
  + K_z \otimes I_y \otimes I_x
  + \mathrm{diag}(\mathrm{vec}(V))\bigr]\,\mathrm{vec}(U)
  = \mathrm{vec}(B),
\end{equation}
where $V$ is the array of potential values $V(\boldsymbol{x}_i)$
at the grid points.
When the potential is separable,
$V(\boldsymbol{x}) = f_x(x) + f_y(y) + f_z(z)$,
the potential matrix decomposes as
$\mathrm{diag}(\mathrm{vec}(V))
= I_z \otimes I_y \otimes F_x + I_z \otimes F_y \otimes I_x
+ F_z \otimes I_y \otimes I_x$
with $F_d = \mathrm{diag}(f_d(x_1),\ldots,f_d(x_n))$, and
the system~\eqref{eq:general_system} reduces to the Kronecker
form
\begin{equation}\label{eq:tensor_system}
  (I_z \otimes I_y \otimes H_x
  + I_z \otimes H_y \otimes I_x
  + H_z \otimes I_y \otimes I_x)\,\mathrm{vec}(U)
  = \mathrm{vec}(B),
\end{equation}
where $H_d = K_d + F_d = M_d^{-1}S_d + F_d$ for each axis
$d \in \{x,y,z\}$.
Each $H_d$ can be diagonalized by 
\begin{equation}\label{eq:eigen_decomp}
  H_d = M_d^{-1/2}(M_d^{-1/2}S_d M_d^{-1/2} + F_d)M_d^{1/2}
  = T_d \Lambda_d T_d^{-1},
\end{equation}
where $T_d = M_d^{-1/2}Q_d$, $T_d^{-1} = Q_d^T M_d^{1/2}$,
and $Q_d \Lambda_d Q_d^T$ is the eigendecomposition of the symmetric
matrix $M_d^{-1/2}S_d M_d^{-1/2} + F_d$.

The solution is then computed in four steps:
\begin{enumerate}[nosep]
  \item  Off-line setup (one-time): compute the eigendecomposition
    $H_d = T_d \Lambda_d T_d^{-1}$ for each axis $d$.
    Since $H_d$ is an $n \times n$ matrix, this costs $O(n^3)=O(N)$.
  \item Forward transform: apply $T_z^{-1} \otimes T_y^{-1} \otimes T_x^{-1}$ to $B$.
  \item Solve the diagonal system: divide by $(\Lambda_x)_{ii} + (\Lambda_y)_{jj} + (\Lambda_z)_{kk}$.
  \item Backward transform: apply $T_z \otimes T_y \otimes T_x$.
\end{enumerate}
Steps 2 and 4 contain the dominant computational cost  $\mathcal{O}(N^{4/3})$ for $N = n^3$.
The memory cost is $\mathcal{O}(N)$.

The same factorization applies to the time-dependent
Schr\"odinger propagator $e^{-i(-\Delta + V_1)\Delta t}$.
Since $-\Delta + V_1 = T\,\Lambda\,T^{-1}$, where
$T = T_z \otimes T_y \otimes T_x$ and $\Lambda$ is the diagonal
tensor of eigenvalue sums
$\Lambda_{ijk} = (\Lambda_x)_{ii} + (\Lambda_y)_{jj} + (\Lambda_z)_{kk}$,
the matrix exponential is
\[
  e^{-i(-\Delta + V_1)\Delta t}\,\psi
  = T\,(e^{-i\Lambda\Delta t} \odot (T^{-1}\,\psi)),
\]
with $\odot$ denoting pointwise multiplication. The only change from the direct inversion is that the eigenvalue division in step~3 is replaced by multiplication
by $e^{-i\Lambda_{ijk}\Delta t}$. Steps 1, 2 and 4 are the same.
Thus, once the eigendecomposition is computed, both the direct
solve $(-\Delta + V_1)^{-1}b$ and the propagator
$e^{-i(-\Delta + V_1)\Delta t}\psi$ cost $O(N^ {1+1/d})$.

\subsection{Numerical tests}
\label{sec:sem_accuracy}

We demonstrate the solver on the equation
\begin{equation}\label{eq:test_problem}
  (-\Delta + V)\,u = f, \quad \boldsymbol{x} \in [-1,1]^d.
\end{equation}
For 3D, we use the non-isotropic separable potential
\begin{equation}\label{eq:test_potential}
  V(\boldsymbol{x}) = 1600\Bigl(\sin^2\bigl(\tfrac{\pi}{4}x\bigr)
  + \sin^2\bigl(\tfrac{\pi}{4}y\bigr)
  + \sin^2\bigl(\tfrac{\pi}{4}z\bigr)\Bigr)
  + (x^2 + 2y^2 + 3z^2),
\end{equation}
exact solution $u^* = \sin(\pi x)\sin(2\pi y)\sin(3\pi z)$,
and $f = (-\Delta + V)\,u^*$.
 For
 4D and 6D tests, we use the
harmonic trapping potential
$V(\boldsymbol{x}) =   \sum_{k=1}^{d} x_k^2$,
and the exact solution $u^* = \prod_{k=1}^{d} \sin(k\pi x_k)$.

All computations are implemented in Python with JAX, similar to the Python implementation in~\cite{ZhangShenLiu}.
Table~\ref{tab:accuracy_gh200} reports 
\emph{setup} time for  offline eigendecomposition and eigenvalue array
construction, and
\emph{solve} time for applying the direct solver,
which would be the per-iteration cost in an iterative method such as PCG in later sections.

In Table~\ref{tab:accuracy_gh200}, we also report different precision formats on a GH200 GPU
(96\,GB HBM3).
FP32 (single precision) stores each number in 32 bits, using half the memory of FP64.
TF32 (TensorFloat-32) uses 32-bit storage but performs matrix multiplications on GPU tensor cores at reduced precision
for fast matrix operations.  See~\cite{ZhangShenLiu} for more comparison of TF32 with FP32.
BF16 (bfloat16) uses only 16 bits per number, further
reducing memory and enabling larger grids on the same device, but with only
${\sim}3$-digit precision. 

\begin{table}[ht]\centering
\caption{Direct inversion of $(-\Delta + V)$ via $Q^k$ SEM on Nvidia GH200 96\,GB.
3D: potential~\eqref{eq:test_potential},
4D/6D: $V = \sum x_k^2$.
At matched $n$, solve times (in seconds) are identical across $Q^k$.
In FP64, all three dimensions reach ${\sim}10^9$ DoFs:
$1099^3 \approx 1.3\times10^9$,
$179^4 \approx 1.0\times10^9$,
$33^6 \approx 1.3\times10^9$.}
\label{tab:accuracy_gh200}
{\small
\begin{tabular}{@{}lclrrrrrl@{}}
\toprule
Precision & Dim & $Q^k$ & $N_{\text{cell}}$ & $n$ & DoFs & Setup (s) & Solve (s) & $\ell^2$ rel.\ err \\
\midrule
FP64 & 3D & $Q^{1}$  & 1100 & 1099 & $1099^3$ & 5.2 & 0.508 & $3.66\times10^{-6}$  \\
     &    & $Q^{2}$  &  550 & 1099 & $1099^3$ & 5.1 & 0.512 & $1.93\times10^{-9}$  \\
     &    & $Q^{10}$ &  110 & 1099 & $1099^3$ & 6.4 & 0.510 & $7.66\times10^{-13}$$^\dagger$ \\
\cmidrule{2-9}
     & {4D} & {$Q^{10}$} & {18} & {179} & {$179^4$} & {6.2} & {0.195} & {$1.29\times10^{-13}$$^\dagger$} \\
\cmidrule{2-9}
     & {6D} & {$Q^{2}$}  & {17} & {33}  & {$33^6$}  & {5.9} & {0.258} & {$2.95\times10^{-2}$$^\S$} \\
\midrule
FP32 & 3D & $Q^{2}$  &  500 & 999  & $999^3$  & 5.4 & 0.270 & $7.17\times10^{-7}$  \\
\cmidrule{2-9}
     & 4D & $Q^{10}$ &  15  & 149  & $149^4$  & 5.8 & 0.058 & $4.99\times10^{-7}$  \\
     &    & $Q^{10}$ &  20  & 199  & $199^4$  & 5.8 & 0.188 & $6.20\times10^{-7}$$^\ddagger$  \\
\cmidrule{2-9}
     & 6D & $Q^{10}$ &   3  &  29  & $29^6$   & 5.7 & 0.052 & $4.75\times10^{-4}$  \\
\midrule
TF32 & 3D & $Q^{2}$  &  500 & 999  & $999^3$  & 6.6 & 0.133 & $5.13\times10^{-4}$  \\
\midrule
BF16 & 3D & $Q^{2}$  &  500 & 999  & $999^3$  & 5.4 & 0.088 & $1.65\times10^{-2}$  \\
     &    & $Q^{2}$  &  800 & 1599 & $1599^3$ & 5.6 & 0.517 & $1.64\times10^{-2}$  \\
\bottomrule
\end{tabular}
}

\medskip\footnotesize
$\dagger$\,FP64 rounding floor.
$\ddagger$\,FP32 rounding floor.
$\S$\,Mesh too coarse for $Q^2$ to resolve  $ \prod_{k=1}^6 \sin(k\pi x_k)$.
\end{table}

\section{Efficient Inversion of $-\Delta + V$}
\label{sec:gpe}

When $V = V_1 + V_2$ contains a separable $V_1$ and a non-separable $V_2$, we use PCG with the 
preconditioner $(-\Delta + V_1)^{-1}$.

\subsection{Test potentials}\label{sec:test_potentials}

We consider two physically motivated potentials.

\noindent\emph{(a) Harmonic plus quartic potential}~\cite{antoine2017}:
\begin{equation}\label{eq:quartic}
  V(x,y,z) = 2(1{-}\alpha)(\gamma_x x^2 + \gamma_y y^2)
  + \tfrac{\kappa}{2}(x^2+y^2)^2 + \gamma_z z^2.
\end{equation}
We consider $\gamma_x = \gamma_y = 1$, $\gamma_z = 3$,
$\alpha = 1.4$, $\kappa = 0.3$,
and we split $V = V_1 + V_2$ with separable part
$V_1 = \gamma_x x^2 + \gamma_y y^2 + \gamma_z z^2$ and
non-separable remainder
\[
  V_2 = (2(1{-}\alpha){-}1)(\gamma_x x^2 {+} \gamma_y y^2)
  + \tfrac{\kappa}{2}(x^2{+}y^2)^2
  = -1.8(x^2{+}y^2) + 0.15(x^2{+}y^2)^2.
\]
The quartic coupling $(x^2+y^2)^2$ makes $V_2$ genuinely non-separable. 

\smallskip
\noindent\emph{(b) Harmonic plus stirrer potential}~ \cite{AntoineDuboscq2015,JacksonMcCannAdams1998, bao2004computing}:
\begin{equation}\label{eq:stirrer}
  V(x,y,z) = \underbrace{\gamma_x^2 x^2 + \gamma_y^2 y^2 + \gamma_z^2 z^2}_{V_1}
  + \underbrace{2w_0\, e^{-\delta\left((x - r_0)^2 + y^2\right)}}_{V_2},
\end{equation}
with $\gamma_x = 1$, $\gamma_y = 1$, $\gamma_z = 2$, $w_0 = 4$, $\delta = 1$, $r_0 = 1$.
This potential includes a harmonic trap with a localized Gaussian
stirrer.

\subsection{PCG for $-\Delta + V_1 + V_2$ (non-separable case)}
\label{sec:pcg_inv}

When $V = V_1 + V_2$ contains a non-separable component $V_2$,
the system $(-\Delta + V_1 + V_2)\,u = f$ cannot be solved directly
by the tensor-product solver.
We consider the preconditioned conjugate gradient (PCG) method.
Antoine et al.~\cite{antoine2017} showed that PCG
with the combined preconditioner
$P_C = V^{-1/2}(-\Delta)^{-1}V^{-1/2}$
achieves grid-independent iteration counts.
In this subsection, we show that the preconditioner $  P = (-\Delta + V_1)^{-1}$ is stronger.
It is applied by the tensor-product direct solver
with a cost $O(N^{4/3})$ in 3D. 

\begin{example}[PCG on bounded domain]
\label{ex:pcg}
We consider the quartic~\eqref{eq:quartic} and
stirrer~\eqref{eq:stirrer} potentials on $[-8,8]^3$ with
$Q^6$ SEM, Dirichlet BC, a random right-hand side $f$,
FP32 arithmetic, on a GH200 GPU. In all FP32 PCG tests reported here, the offline eigendecomposition
of $-\Delta + V_1$ is computed in FP64 and only then cast to lower
precision for the online PCG iteration.
Figure~\ref{fig:pcg_convergence} shows the convergence at
two grid sizes ($197^3$ and $701^3$ DoFs).
Both preconditioners exhibit grid-independent iteration counts.
The proposed preconditioner converges in 33--38 iterations
(quartic) and 5--6 iterations (stirrer), compared to 214--255
iterations with $(-\Delta)^{-1}$ for the quartic
potential 
and ${\sim}375$ iterations with $P_C = V^{-\frac12}(-\Delta)^{-1}V^{-\frac12}$ for the stirrer potential.
\end{example}

\begin{figure}[ht]\centering
\includegraphics[width=0.495\textwidth]{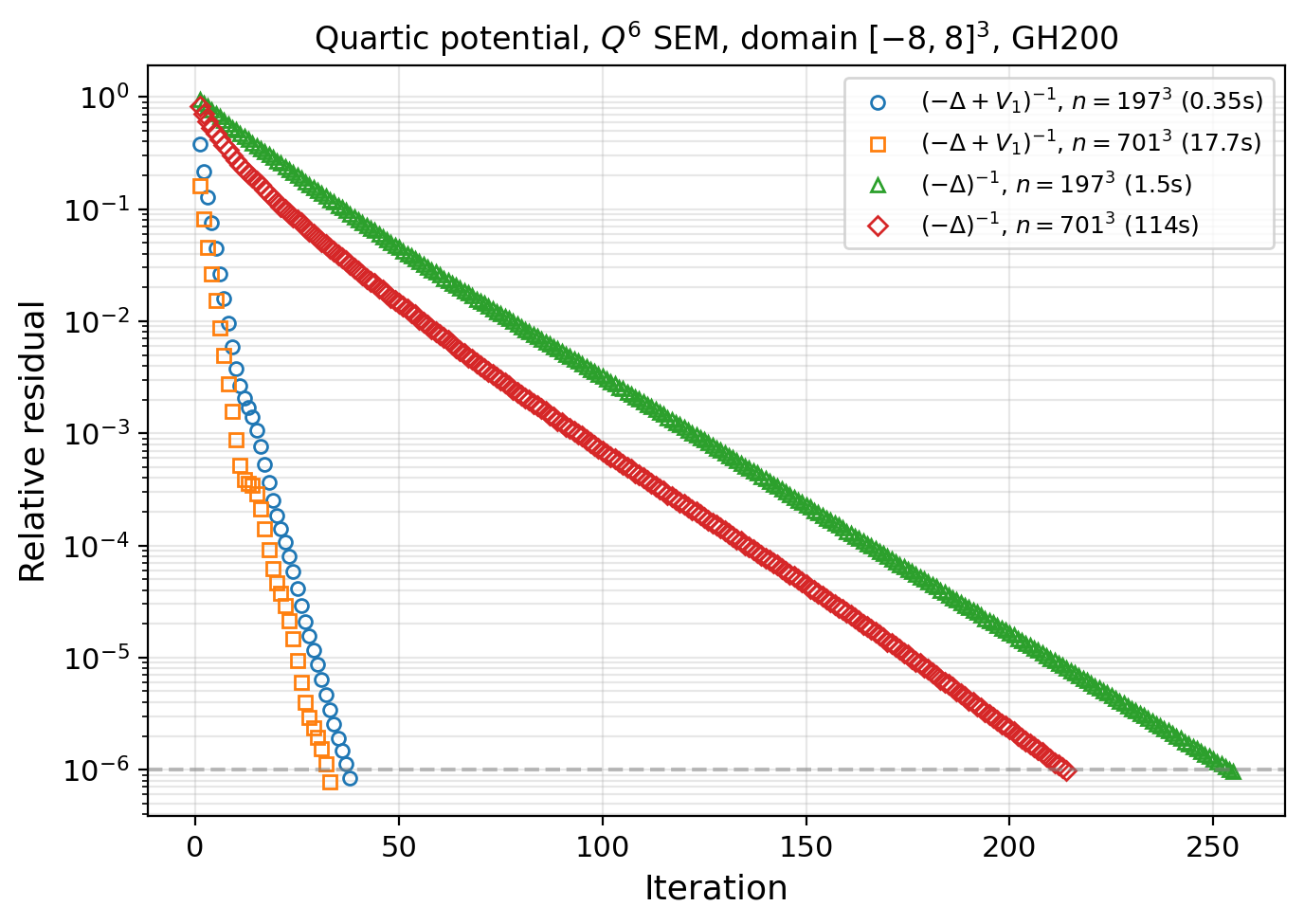}\hfill
\includegraphics[width=0.495\textwidth]{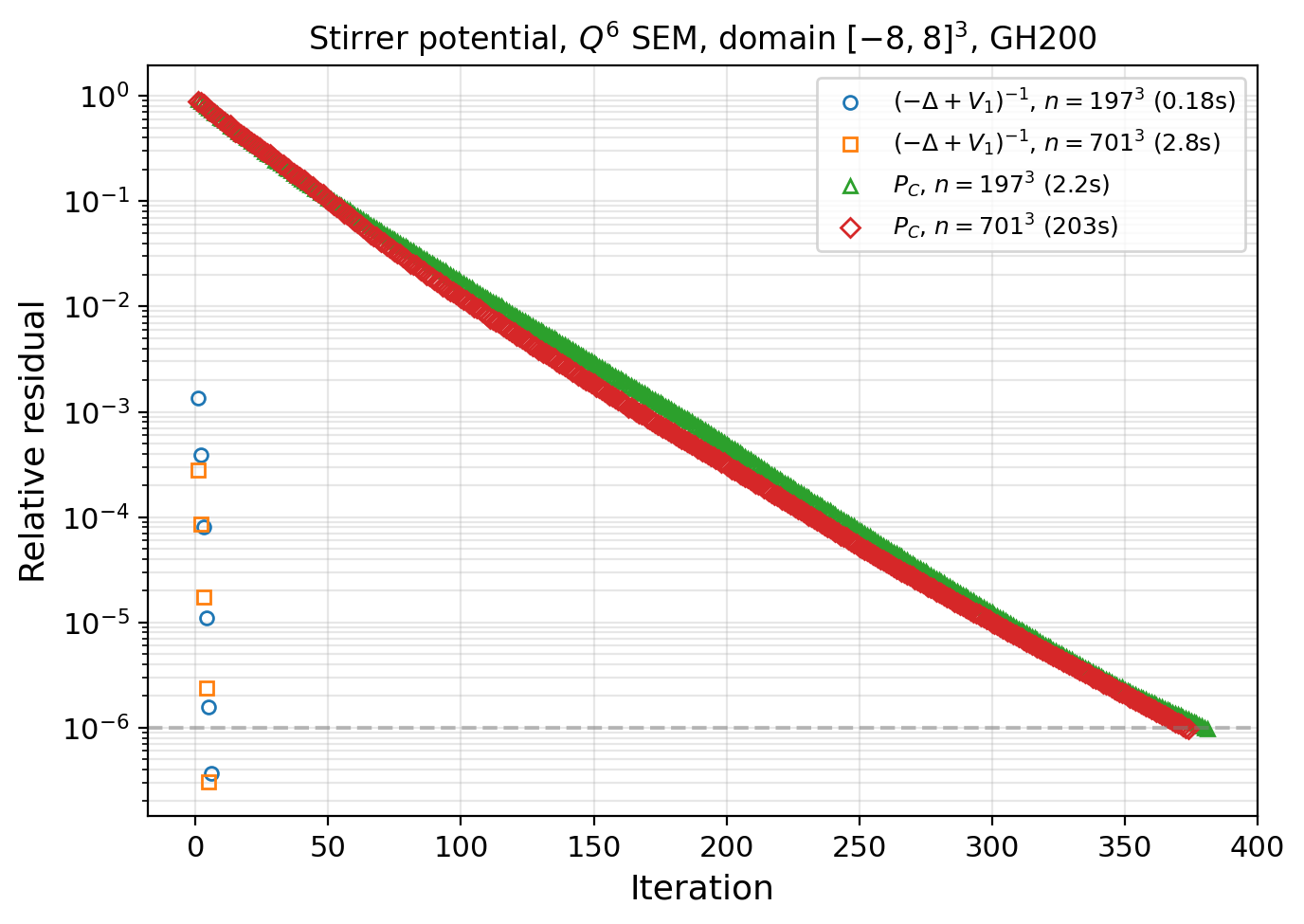}\\[4pt]
\includegraphics[width=0.495\textwidth]{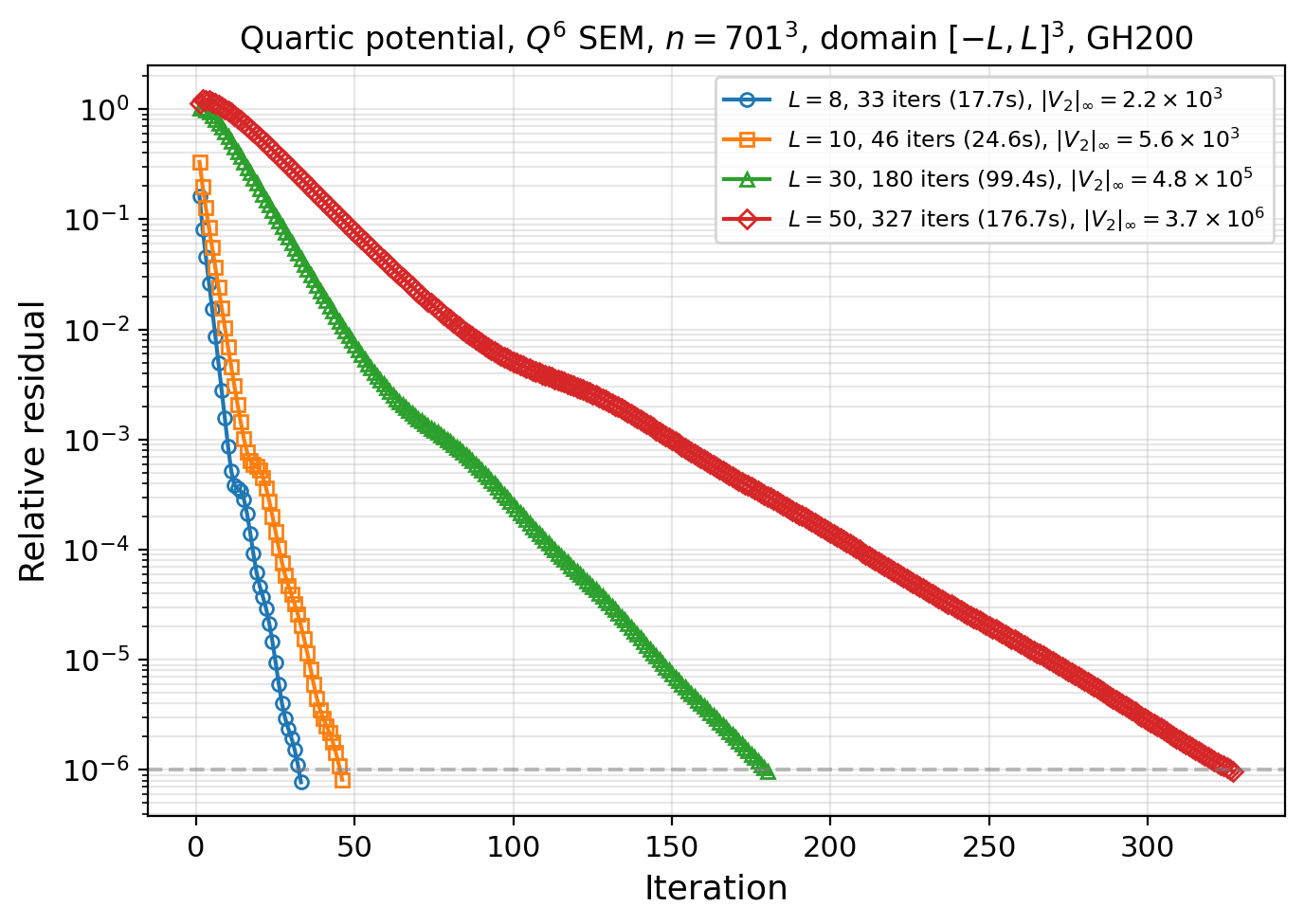}\hfill
\includegraphics[width=0.495\textwidth]{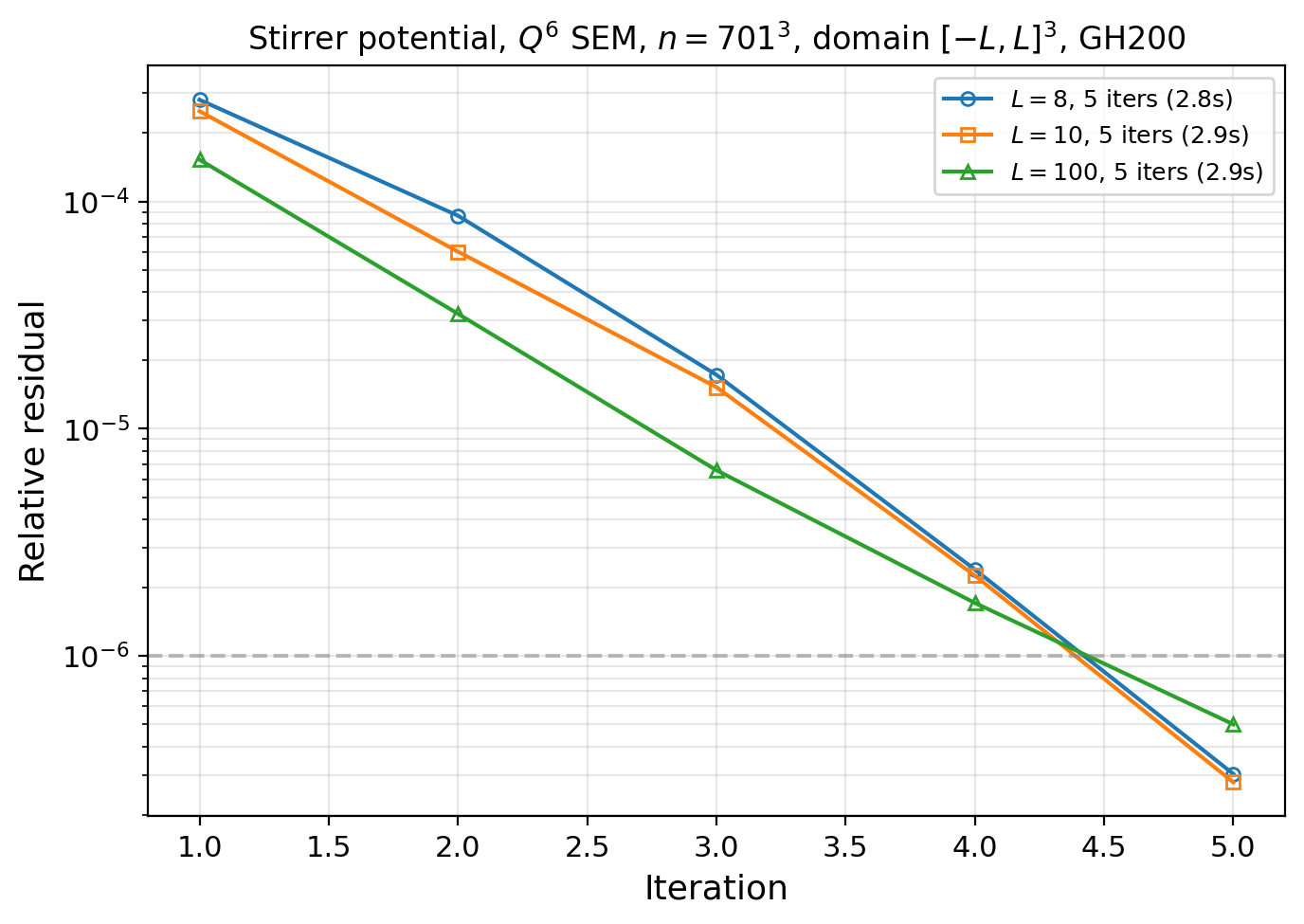}
\caption{PCG convergence (relative residual vs.\ iteration) for $(-\Delta + V_1 + V_2)\,u = f$,
$Q^6$ SEM, FP32, GH200.
\emph{Top row:} quartic potential~\eqref{eq:quartic} (left) and
stirrer potential~\eqref{eq:stirrer} (right), both at $n=701^3$,
comparing the proposed preconditioner $(-\Delta+V_1)^{-1}$ with the
$(-\Delta)^{-1}$ or $P_C$.
\emph{Bottom row:} domain dependence of $(-\Delta+V_1)^{-1}$ for the
quartic potential (left, $L=8,10,30,50$) and the stirrer potential
(right, $L=8,10,100$).
Total GPU time for each case is shown in the legend.}
\label{fig:pcg_convergence}
\end{figure}

Figure~\ref{fig:pcg_convergence} also shows that the 
performance of the preconditioner $(-\Delta + V_1)^{-1}$ is independent of both the domain size $L$ and the
mesh size $h$ for the stirrer potential, while for the quartic
potential, the iteration count grows with $L$.

In the rest of this section, we explain this behavior by analyzing
the spectrum of the preconditioned operator, which involves standard spectral theory
for conforming finite element discretizations
(such as $Q^k$ SEM) under two assumptions on
$V_1$ and $V_2$.

\begin{description}[nosep]
\item[\textbf{Assumption~I}] (bounded domain):
  $V_1 \in L^\infty(\Omega)$ with $V_1 \ge 0$, and
  $V_2 \in L^\infty(\Omega)$.
\item[\textbf{Assumption~II}] (confining potential):
  $V_1 \ge 0$ with $V_1(x) \to \infty$ as $|x|\to\infty$ and
  $V_1 \in L^\infty_{\mathrm{loc}}(\R^d)$, and
  $V_2 \in L^\infty(\R^d)$.
\end{description}
Assumption~II is stronger since it implies
Assumption~I on any bounded domain $\Omega$.

\begin{definition}[$A$-inner product]
\label{def:A_inner}
For a bounded domain  $\Omega \subset \R^d$, under Assumption~I, 
define the inner product for  $H^1_0(\Omega)$:
\[
  \langle u,v\rangle_A
  = \int_\Omega(\nabla u \cdot \nabla v + V_1\,u\,v)\,\mathrm{d}\boldsymbol{x}.
\]
Since $V_1 \ge 0$ and $V_1 \in L^\infty(\Omega)$,
the Poincar\'e inequality gives
$(1+C_P^2)^{-1}\|u\|_{H^1}^2 \le \|u\|_A^2
\le \max(1,\|V_1\|_{L^\infty})\|u\|_{H^1}^2$,
so $\|\cdot\|_A$ is equivalent to the standard $H^1$ norm.
\end{definition}

\begin{definition}[Perturbation operators and Riesz map]
\label{def:operators}
Under the setting of Definition~\ref{def:A_inner},
let $X_h \subset H^1_0(\Omega)$ be a conforming finite element
subspace with mesh parameter $h > 0$.
Define:
\begin{enumerate}[label=\upshape(\alph*)]
\item The \emph{Riesz map}
$R : L^2(\Omega) \to H^1_0(\Omega)$ by
$\langle Rf, v\rangle_A = (f,v)_{L^2}$
for all $v \in H^1_0(\Omega)$.
\item The \emph{continuous perturbation operator}
$K : H^1_0(\Omega) \to H^1_0(\Omega)$ by
\begin{equation}\label{eq:K_def}
  \langle Ku, v \rangle_A
  = \int_\Omega V_2\,u\,v\,\mathrm{d}\boldsymbol{x},
  \quad \forall\, v \in H^1_0(\Omega).
\end{equation}
\item  
$K_h : X_h \to X_h$ by
$\langle K_h u_h, v_h \rangle_A
= \int_\Omega
V_2\,u_h\,v_h\,\mathrm{d}\boldsymbol{x}$
for all $v_h \in X_h$.
\item The \emph{projection}
$\Pi_h : H^1_0(\Omega) \to X_h$ by
$\langle \Pi_h u, v_h \rangle_A = \langle u, v_h \rangle_A$
for all $v_h \in X_h$.
\item The \emph{Galerkin discretizations}
$-\Delta_h + V_{1,h}$ and
$-\Delta_h + V_{1,h} + V_{2,h}$ on $X_h$:
\[
  ((-\Delta_h + V_{1,h})\,u_h,\, v_h)_{L^2}
  := \langle u_h, v_h \rangle_A,
\]
\[
  ((-\Delta_h + V_{1,h} + V_{2,h})\,u_h,\, v_h)_{L^2}
  := \langle u_h, v_h \rangle_A
  + \int_\Omega V_2\,u_h\,v_h
  \,\mathrm{d}\boldsymbol{x},
\]
for all $u_h, v_h \in X_h$.
Here $V_{2,h}$ denotes the restriction of $V_2$ to $X_h$
via the $L^2$ inner product, i.e.,
$(V_{2,h}\,u_h, v_h)_{L^2}
= \int_\Omega V_2\,u_h\,v_h
\,\mathrm{d}\boldsymbol{x}$.
\end{enumerate}
\end{definition}

The operators in Definition~\ref{def:operators} correspond directly
to the components of the preconditioned system:
$R$ represents  $(-\Delta + V_1)^{-1}$,
$K$ represents $(-\Delta + V_1)^{-1} V_2$,
and $K_h$ is the discrete version of $K$.
We now establish their key properties.

\begin{lemma}
\label{lem:operator_framework}
Under the setting of Definition~\ref{def:operators}:
\begin{enumerate}[label=\upshape(\alph*)]
\item $K = R \circ (V_2\,\cdot) \circ \iota$, where
$\iota : H^1_0(\Omega) \hookrightarrow L^2(\Omega)$
is the natural embedding.
In particular, $K$ is compact and self-adjoint on
$(H^1_0(\Omega), \langle\cdot,\cdot\rangle_A)$.
\item $K_h u_h = \Pi_h(K u_h)$ for all $u_h \in X_h$.
\item $K = (-\Delta + V_1)^{-1} V_2$ and
$K_h = (-\Delta_h + V_{1,h})^{-1} V_{2,h}$.
\end{enumerate}
\end{lemma}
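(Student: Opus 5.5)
We take the three parts in order, using only the defining relations in Definition~\ref{def:operators} and the equivalence of $\|\cdot\|_A$ with the $H^1$ norm from Definition~\ref{def:A_inner}.

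\emph{Part (a).} First check that $R$ is well defined and bounded. For $f\in L^2(\Omega)$, the functional $v\mapsto (f,v)_{L^2}$ is bounded on $(H^1_0,\|\cdot\|_A)$, since
\[
  |(f,v)|\le \|f\|_{L^2}\|v\|_{L^2}\le C\|f\|_{L^2}\|v\|_A
\]
by the Poincar\'e inequality and the norm equivalence. The Riesz representation theorem in the Hilbert space $(H^1_0,\langle\cdot,\cdot\rangle_A)$ then gives a unique $Rf$ with $\|Rf\|_A\le C\|f\|_{L^2}$. The same argument, with $|\int V_2uv|\le \|V_2\|_{L^\infty}\|u\|_{L^2}\|v\|_{L^2}$, shows that $K$ is well defined.

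Next, for $u\in H^1_0$ we have $V_2u\in L^2$, because $V_2\in L^\infty$. Hence $\langle R(V_2\iota u),v\rangle_A=(V_2u,v)_{L^2}=\langle Ku,v\rangle_A$ for all $v$. Uniqueness in the Riesz representation gives $K=R\circ(V_2\cdot)\circ\iota$.

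Compactness comes from Rellich--Kondrachov: $\iota$ is compact on the bounded domain $\Omega$, multiplication by $V_2$ is bounded on $L^2$, and $R$ is bounded. The composition of a compact map with bounded maps is compact. Self-adjointness follows from
\[
  \langle Ku,v\rangle_A=\int V_2uv=\int V_2vu=\langle Kv,u\rangle_A=\langle u,Kv\rangle_A .
\]

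\emph{Part (b).} This is a one-line Galerkin orthogonality check. For $u_h,v_h\in X_h$, the definition of $\Pi_h$ gives $\langle \Pi_h(Ku_h),v_h\rangle_A=\langle Ku_h,v_h\rangle_A$. Since $v_h\in X_h\subset H^1_0$, this equals $\int V_2u_hv_h=\langle K_hu_h,v_h\rangle_A$. Both $\Pi_hKu_h$ and $K_hu_h$ lie in $X_h$, and $\langle\cdot,\cdot\rangle_A$ is an inner product on $X_h$, so the two are equal.

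\emph{Part (c).} This is mainly a matter of interpreting the symbols. In the continuous setting, $(-\Delta+V_1)^{-1}$ is understood as the weak solution operator, which by definition is $R$. So $(-\Delta+V_1)^{-1}V_2u=R(V_2u)=Ku$ by (a).

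In the discrete setting, $-\Delta_h+V_{1,h}$ is invertible on $X_h$ because its form $\langle\cdot,\cdot\rangle_A$ is coercive. For $w_h=(-\Delta_h+V_{1,h})^{-1}V_{2,h}u_h$ and every $v_h\in X_h$ we get
\[
  \langle w_h,v_h\rangle_A=((-\Delta_h+V_{1,h})w_h,v_h)_{L^2}=(V_{2,h}u_h,v_h)_{L^2}=\int V_2u_hv_h=\langle K_hu_h,v_h\rangle_A .
\]
Hence $w_h=K_hu_h$.

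The proof has no real obstacle. The only point needing care is (c), where we have to state explicitly that $(-\Delta+V_1)^{-1}$ denotes the weak (Riesz) solution operator rather than a classical inverse. If $V_1$ is merely $L^\infty$, the strong operator $-\Delta+V_1$ may not have the domain one might expect, so the weak interpretation is essential.
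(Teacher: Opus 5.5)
Your proposal is correct and follows the paper's proof essentially step for step. For (a) you use Rellich--Kondrachov compactness of $\iota$ composed with bounded maps, for (b) the defining relation of $\Pi_h$ together with $X_h\subset H^1_0(\Omega)$, and for (c) uniqueness of the Riesz/Galerkin representative. The only differences are that you also check well-posedness of $R$ and $K$ via the Riesz representation theorem and prove the factorization $K=R\circ(V_2\,\cdot)\circ\iota$ directly in (a), whereas the paper takes these for granted and only derives $Ku=R(V_2u)$ in part (c).
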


\begin{proof}
(a) First, the multiplication
$u \mapsto V_2 u$ is bounded on $L^2$, since $V_2 \in L^\infty(\Omega)$. Second, $R$ is a bounded operator, since it represents $(-\Delta+V_1)^{-1}$.
The embedding $\iota$ is compact by the
Rellich--Kondrachov theorem, so
$K = R \circ (V_2\,\cdot) \circ \iota$ is compact.
For self-adjointness,
\eqref{eq:K_def} gives
$\langle Ku, v\rangle_A = \int V_2\,uv$
and  
$\langle Kv, u\rangle_A = \int V_2\,vu$.
Since $\langle\cdot,\cdot\rangle_A$ is symmetric,
$\langle Ku, v\rangle_A = \langle u, Kv\rangle_A$.

\medskip\noindent
(b)
For any $u_h, v_h \in X_h$,
\[
  \langle K_h u_h, v_h \rangle_A
  = \int V_2\,u_h\,v_h
  = \langle K u_h, v_h \rangle_A
  = \langle \Pi_h(K u_h), v_h \rangle_A,
\]
so $K_h u_h = \Pi_h(K u_h)$ for all $u_h \in X_h$,
i.e., $K_h = \Pi_h K|_{X_h}$.

\medskip\noindent
(c) By definition,
$\langle Ku, v \rangle_A = \int V_2\,u\,v
= (V_2 u, v)_{L^2}
= \langle R(V_2 u), v \rangle_A$,
so $Ku = R(V_2 u)$, i.e.,
$K = (-\Delta + V_1)^{-1} V_2$.
Similarly, $K_h u_h$ is the unique element of $X_h$
satisfying
$\langle K_h u_h, v_h \rangle_A
= \int V_2\,u_h\,v_h$
for all $v_h \in X_h$, which is precisely
$(-\Delta_h + V_{1,h})^{-1}(V_{2,h}\,u_h)$.
\end{proof}

\begin{theorem}[Spectral convergence of the preconditioned operator]
\label{thm:pcg}
For a bounded domain  $\Omega \subset \R^d$, under Assumption~I, 
let $T = (-\Delta + V_1)^{-1}(-\Delta + V_1 + V_2) = I + K$
and
$T_h = (-\Delta_h + V_{1,h})^{-1}(-\Delta_h + V_{1,h} + V_{2,h})
= I_h + K_h$,
where $I_h$ denotes the identity on $X_h$,
be the continuous and discrete preconditioned operators,
and let
$\widetilde{K}_h = \Pi_h K \Pi_h : H^1_0(\Omega) \to
H^1_0(\Omega)$.
Then:
\begin{enumerate}[label=\upshape(\roman*)]
\item \textbf{Norm convergence.}
$\|\widetilde{K}_h - K\| \to 0$ as $h \to 0$
in the operator norm on $(H^1_0(\Omega), \|\cdot\|_A)$.
\item \textbf{Spectral convergence and clustering.}
For every $\epsilon > 0$, $T_h$ has at most
$M(\epsilon)$ eigenvalues outside $(1{-}\epsilon,1{+}\epsilon)$,
where $M(\epsilon)$ is independent of $h$.
Each nonzero eigenvalue $\mu$ of $K$ with
multiplicity $m$ is approximated by exactly $m$ eigenvalues
of $K_h$ converging to $\mu$ as $h \to 0$.
\item \textbf{Uniform condition number.}
If $-\Delta + V_1 + V_2 > 0$, then
$\kappa(T_h) \le \kappa(T)$ for every $h$,
and $\kappa(T_h) \to \kappa(T)$ as $h \to 0$.
\end{enumerate}
\end{theorem}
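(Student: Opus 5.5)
The plan is to work throughout in the Hilbert space $(H^1_0(\Omega),\langle\cdot,\cdot\rangle_A)$ and combine Lemma~\ref{lem:operator_framework} with standard compact-operator approximation theory.

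For (i), we use that $\Pi_h$ is the $A$-orthogonal projection onto $X_h$. We assume the usual approximation property that $\Pi_h\to I$ strongly as $h\to0$, which holds for conforming $Q^k$ spaces on quasi-uniform meshes. We then split
\[
\widetilde K_h - K = (\Pi_h-I)K\Pi_h + K(\Pi_h - I).
\]
Because $K$ is compact (Lemma~\ref{lem:operator_framework}(a)) and $\Pi_h\to I$ strongly with $\|\Pi_h\|=1$, the first term satisfies $\|(\Pi_h-I)K\|\to0$. This follows from the standard fact that strong convergence becomes uniform on the relatively compact set $K(\text{unit ball})$. For the second term, self-adjointness of $K$ and of $\Pi_h$ gives $\|K(\Pi_h-I)\| = \|(\Pi_h-I)K\|$, which again tends to $0$. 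This is the step requiring the most care, so I will spell out the compactness argument there.

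For (ii), note that $\widetilde K_h$ restricted to $X_h$ equals $K_h$ (Lemma~\ref{lem:operator_framework}(b)), and that $\widetilde K_h$ vanishes on $X_h^\perp$. Hence the nonzero spectrum of $\widetilde K_h$ coincides with that of $K_h$, multiplicities included. Both operators are self-adjoint and compact. We then apply the min--max principle (Weyl's inequality) for self-adjoint operators: the $j$-th largest positive eigenvalue, and likewise the $j$-th most negative one, differ between $\widetilde K_h$ and $K$ by at most $\|\widetilde K_h-K\|$. Fix $\epsilon>0$ and let $M(\epsilon)$ be the number of eigenvalues of $K$ with $|\mu|\ge\epsilon/2$, counted with multiplicity; this number is finite by compactness. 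Once $\|\widetilde K_h-K\|<\epsilon/2$, Weyl's inequality shows that $K_h$ has at most $M(\epsilon)$ eigenvalues with $|\mu|\ge\epsilon$. The eigenvalues of $T_h=I_h+K_h$ are $1+\mu$, so at most $M(\epsilon)$ of them lie outside $(1-\epsilon,1+\epsilon)$.

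The finitely many coarse meshes with $\|\widetilde K_h-K\|\ge\epsilon/2$ would need separate handling if $M$ is to be uniform in $h$ literally. I would either state the result for $h\le h_0(\epsilon)$, or enlarge $M(\epsilon)$ by the dimensions of those finitely many $X_h$ when the mesh family is discrete. Convergence of each eigenvalue $\mu\neq0$ of multiplicity $m$ to exactly $m$ eigenvalues of $K_h$ also follows from the ordered Weyl bounds, or alternatively from Kato/Riesz-projection continuity under norm convergence.

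For (iii), the operators $T$ and $T_h$ are self-adjoint in the $A$-inner product, and the hypothesis $-\Delta+V_1+V_2>0$ makes them positive. By the Rayleigh quotient,
\[
\langle T_h u_h,u_h\rangle_A/\|u_h\|_A^2 = 1+\frac{\int V_2u_h^2}{\|u_h\|_A^2} = \langle Tu_h,u_h\rangle_A/\|u_h\|_A^2 .
\]
Since $X_h\subset H^1_0$, we get $\lambda_{\min}(T)\le\lambda_{\min}(T_h)$ and $\lambda_{\max}(T_h)\le\lambda_{\max}(T)$, where extremes of $T$ are understood as inf/sup of its spectrum. It follows that $\kappa(T_h)\le\kappa(T)$, and $\kappa(T)$ is finite because $T=I+K$ is bounded and positive definite. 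For the limit, $\sup\sigma(T)$ is either $1$ or $1+\max\mu$, and similarly for $\inf\sigma(T)$. Part (ii) gives convergence of the extreme eigenvalues of $K_h$ to those of $K$. When $\sup\sigma(T)=1$ or $\inf\sigma(T)=1$, we instead use that $\sigma(T_h)$ contains points near $1$ as $\dim X_h\to\infty$, since the eigenvalues of the compact $K_h$ accumulate near $0$. Together these give $\kappa(T_h)\to\kappa(T)$.
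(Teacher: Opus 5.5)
Parts (i) and (iii), and the eigenvalue-convergence half of (ii), are sound. Your decomposition in (i) is a harmless variant of the paper's, and Weyl or Riesz-projection arguments can stand in for the citation of Osborn. Your handling of the case $\sup\sigma(T)=1$ or $\inf\sigma(T)=1$ in (iii) is in fact more careful than the paper's.

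The gap is in the clustering claim of (ii). The statement asserts a single $M(\epsilon)$ valid for \emph{every} $h$. Your Weyl-perturbation argument only controls those $h$ with $\|\widetilde K_h-K\|<\epsilon/2$, i.e.\ $h<h_0(\epsilon)$. Neither proposed repair closes this. Restricting to $h\le h_0(\epsilon)$ proves a weaker theorem. Enlarging $M(\epsilon)$ by $\dim X_h$ over the coarse meshes needs hypotheses that are not in the statement: finitely many coarse spaces, or a uniform bound on their dimensions. It also makes $M(\epsilon)$ depend on the mesh family rather than on $K$ alone.

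The missing idea is that compressing $K$ to a subspace can only shrink its ordered eigenvalues, and this needs no smallness of $h$. Since $\langle K_h u_h,u_h\rangle_A=\langle Ku_h,u_h\rangle_A$ for $u_h\in X_h\subset H^1_0(\Omega)$, the Courant--Fischer max--min characterization gives $\mu_j^+(K_h)\le\mu_j^+(K)$ and $|\mu_j^-(K_h)|\le|\mu_j^-(K)|$ for every $j$ and every $h$. Here $\mu_j^\pm$ denotes the $j$-th largest positive, respectively $j$-th most negative, eigenvalue. This is the Poincar\'e/Cauchy interlacing inequality; the paper phrases it as $s_n(\Pi_hK\Pi_h)\le\|\Pi_h\|^2s_n(K)=s_n(K)$. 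Hence the number of eigenvalues of $K_h$ with $|\mu|\ge\epsilon$ is at most $M(\epsilon)=\#\{n:|\mu_n(K)|\ge\epsilon\}$ for all $h$ simultaneously.

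You already use exactly this subspace-restriction argument for the extreme eigenvalues in (iii), so applying it to every $j$ fixes (ii). This matters beyond tidiness. In Theorem~\ref{cor:confining} the paper reuses the same inequality to get an outlier count independent of both $h$ and $L$. A perturbative threshold $h_0(\epsilon)$ would depend on $L$ through $\|(I-\Pi_h)K_L\|$ and could not deliver that.
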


\begin{proof}
Since $\Pi_h$ maps $H^1_0(\Omega)$ onto $X_h$,
$\widetilde{K}_h = \Pi_h K \Pi_h$ maps $X_h^\perp$ to zero
and restricts to $K_h$ on $X_h$
(by Lemma~\ref{lem:operator_framework}(b),
$\Pi_h K u_h = K_h u_h$ for $u_h \in X_h$).
Therefore the nonzero eigenvalues of $\widetilde{K}_h$
are exactly those of $K_h$:
if $\widetilde{K}_h f = \mu f$ with $\mu \ne 0$,
then $f = \mu^{-1}\widetilde{K}_h f \in X_h$,
so $f$ is an eigenfunction of $K_h$.

\medskip\noindent
\emph{Part~\textup{(i)}: Norm convergence.}
Write $K - \widetilde{K}_h
= (I - \Pi_h)K + \Pi_h K(I - \Pi_h)$.
Since $K$ and $\Pi_h$ are both self-adjoint in
$\langle\cdot,\cdot\rangle_A$,
the adjoint of $\Pi_h K(I - \Pi_h)$ is
$(I - \Pi_h)K\Pi_h$.
Because $\|B\| = \|B^*\|$ for any bounded operator $B$ and its adjoint $B^*$
on a Hilbert space,
$\|\Pi_h K(I - \Pi_h)\|
= \|(I - \Pi_h)K \Pi_h\|
\le \|(I-\Pi_h)K\|\,\|\Pi_h\|
= \|(I-\Pi_h)K\|$, thus
\begin{equation}\label{eq:norm_split}
\|\widetilde{K}_h - K\|
\le 2\,\|(I - \Pi_h)K\|.
\end{equation}
It remains to show $\|(I - \Pi_h)K\| \to 0$.
Since $K$ is compact,
the image of the closed unit ball
$\overline{B}_1 = \{f \in H^1_0(\Omega) : \|f\|_A \le 1\}$
under $K$ is precompact in
$(H^1_0(\Omega), \|\cdot\|_A)$.
Consider any fixed $\epsilon > 0$, and 
cover $K(\overline{B}_1)$ by finitely many $\epsilon$-balls
centered at $g_1, \ldots, g_M\in H_0^1(\Omega)$.
Since $C^\infty_c(\Omega)$ is dense in $H^1_0(\Omega)$
and conforming finite element spaces approximate
smooth functions as $h \to 0$ (by polynomial interpolation),
$\bigcup_h X_h$ is dense in $H^1_0(\Omega)$, so
$(I - \Pi_h)g_j \to 0$ in $\|\cdot\|_A$
for each $j$, thus there exists $h_0$ such that
$\|(I - \Pi_h)g_j\|_A < \epsilon$ for all $j$ and all
$h < h_0$.
For any $f$ with $\|f\|_A \le 1$, choose $g_j$ with
$\|Kf - g_j\|_A < \epsilon$.
Since $\|I - \Pi_h\| \le 1$
(orthogonal projection),
\[
  \|(I - \Pi_h)Kf\|_A
  \le \underbrace{\|(I - \Pi_h)(Kf - g_j)\|_A}_{\le\,\epsilon}
  + \underbrace{\|(I - \Pi_h)g_j\|_A}_{<\,\epsilon}
  < 2\epsilon.
\]
Hence $\|(I - \Pi_h)K\| \le 2\epsilon$ for $h < h_0$.
Since $\epsilon$ was arbitrary,
$\|(I - \Pi_h)K\| \to 0$, and
by~\eqref{eq:norm_split},
$\|\widetilde{K}_h - K\| \to 0$.

\medskip\noindent
\emph{Part~\textup{(ii)}:}
\emph{Clustering.}
Since $\Pi_h$ is an orthogonal projection  in
$\langle\cdot,\cdot\rangle_A$, its operator norm is $\|\Pi_h\| = 1$, so the
 singular values $s_n$ satisfy
$s_n(\widetilde{K}_h) = s_n(\Pi_h K \Pi_h) \le s_n(K)$.
For self-adjoint operators, singular values $s$  equal absolute
values of eigenvalues $\mu$, so $|\mu_n(K_h)| \le |\mu_n(K)|$
for each $n$, where eigenvalues are ordered by
decreasing absolute value.
Since $K$ is compact and self-adjoint, its eigenvalues
accumulate only at $0$ ~\cite[Theorem~VI.16]{reedsimon1972},
so for every $\epsilon > 0$, at most
$M(\epsilon) < \infty$ of them satisfy
$|\mu_n(K)| \ge \epsilon$.
Since  $|\mu_n(K_h)| \le |\mu_n(K)|$, the same bound holds for $K_h$, independently of $h$.
Hence $T_h = I_h + K_h$ has at most $M(\epsilon)$
eigenvalues outside $(1{-}\epsilon, 1{+}\epsilon)$.

\emph{Convergence.}
Since $\widetilde{K}_h \to K$ in operator norm and $K$ and $\widetilde{K}_h$ are compact
and self-adjoint, Osborn's spectral approximation
theory~\cite{osborn1975} applies directly.
By the spectral convergence results in
\cite[Section~3]{osborn1975},
for each  nonzero eigenvalue $\mu$ of $K$
with multiplicity $m$, there exist exactly $m$ eigenvalues of
$\widetilde{K}_h$ converging to $\mu$.
Since the nonzero eigenvalues of $\widetilde{K}_h$ are exactly
those of $K_h$,
the same convergence holds for the eigenvalues of $K_h$.

\medskip\noindent
\emph{Part~\textup{(iii)}: Uniform condition number.}
For any $u_h \in X_h$ with $\|u_h\|_A = 1$,
\[
  \langle K_h u_h, u_h\rangle_A
  = \int_\Omega V_2\,|u_h|^2\,\mathrm{d}\boldsymbol{x}
  = \langle K u_h, u_h\rangle_A,
\]
since $u_h \in X_h \subset H^1_0(\Omega)$.
By the Courant--Fischer min-max principle,
$\mu_{\max}(K_h) = \max_{u_h \in X_h, \|u_h\|_A=1}
\langle K u_h, u_h\rangle_A$
and $\mu_{\max}(K) = \sup_{u \in H^1_0, \|u\|_A=1}
\langle K u, u\rangle_A$.
Since $X_h \subset H^1_0(\Omega)$, the maximum over the
smaller set $X_h$ cannot exceed the supremum over
$H^1_0(\Omega)$.
The same reasoning applies to $\mu_{\min}$.
Therefore:
\[
  \mu_{\min}(K) \le \mu_{\min}(K_h)
  \quad\text{and}\quad
  \mu_{\max}(K_h) \le \mu_{\max}(K).
\]
Since $T_h = I_h + K_h$ and $T = I + K$, this gives
$\mu_{\min}(T) \le \mu_{\min}(T_h)$ and
$\mu_{\max}(T_h) \le \mu_{\max}(T)$.
If $T > 0$, then $\mu_{\min}(T) > 0$, so
\[
  \kappa(T_h)
  = \frac{\mu_{\max}(T_h)}{\mu_{\min}(T_h)}
  \le \frac{\mu_{\max}(T)}{\mu_{\min}(T)}
  = \kappa(T)
  \quad\text{for every } h.
\]
Since $\mu_{\max}(K)$ and
$\mu_{\min}(K)$ (if nonzero) are isolated eigenvalues of $K$,
Part~(ii) gives eigenvalues of $K_h$ converging to them.
Combined with $\mu_{\max}(K_h) \le \mu_{\max}(K)$, this gives
$\mu_{\max}(K_h) \to \mu_{\max}(K)$.
Similarly $\mu_{\min}(K_h) \to \mu_{\min}(K)$.
Hence $\kappa(T_h) \to \kappa(T)$.
\end{proof}

\begin{theorem}[Domain-independent clustering for confining potentials]
\label{cor:confining}
Under Assumption~II,
consider the family of domains $\Omega_L = [-L,L]^d$ with
conforming finite element subspaces
$X_h \subset H^1_0(\Omega_L)$, and the preconditioned operator
$T_{h,L}$ defined as in Lemma~\ref{lem:operator_framework},
then for every $\epsilon > 0$, there exist at most $M(\epsilon)$
eigenvalues of $T_{h,L}$ outside $(1{-}\epsilon, 1{+}\epsilon)$,
where $M(\epsilon) < \infty$ is independent of both $h$ and $L$.
Moreover, if $-\Delta + V_1 + V_2 > 0$ on $\R^d$, then
$\kappa(T_{h,L})$ is bounded independently of both $h$ and $L$.

\end{theorem}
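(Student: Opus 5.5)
The plan is to reduce everything to a single operator on the whole space, namely $K_\infty = (-\Delta+V_1)^{-1}V_2$ acting on the energy space
\[
  \mathcal{H} = \Bigl\{u \in H^1(\R^d) : \int_{\R^d} V_1 u^2 < \infty\Bigr\},
  \qquad
  \langle u,v\rangle_A = \int_{\R^d}(\nabla u\cdot\nabla v + V_1 uv).
\]
For every $L$, extension by zero embeds $H^1_0(\Omega_L)$ isometrically into $\mathcal{H}$, and $X_h \subset H^1_0(\Omega_L) \subset \mathcal{H}$ is then a closed subspace. On $\mathcal{H}$ I define $K_\infty$ by $\langle K_\infty u,v\rangle_A = \int V_2 uv$, which is bounded because $V_2\in L^\infty$. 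One caveat: if $V_1$ has no positive lower bound, $\|\cdot\|_A$ controls $L^2$ only up to a Poincaré-type constant. This is fine, because confinement gives $V_1 \ge 1$ outside some ball $B_{R_0}$, and together with the gradient term on the ball this yields $\|u\|_{L^2} \le C\|u\|_A$.

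\textbf{Compactness of $K_\infty$.} This is the main step; the rest is a repetition of Theorem~\ref{thm:pcg}. As in Lemma~\ref{lem:operator_framework}(a), I write $K_\infty = R_\infty\circ(V_2\,\cdot)\circ\iota$, so it suffices that $\iota:(\mathcal{H},\|\cdot\|_A)\hookrightarrow L^2(\R^d)$ is compact. I will prove this in the standard way. Given $\epsilon$, choose $R$ with $V_1 \ge \epsilon^{-2}$ on $|x|>R$; this is exactly where Assumption~II enters. Then $\|u\|_{L^2(|x|>R)}^2 \le \epsilon^2\int V_1u^2 \le \epsilon^2\|u\|_A^2$. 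On the ball $B_R$, Rellich--Kondrachov gives compactness. So the unit ball of $\mathcal{H}$ is totally bounded in $L^2$, and $K_\infty$ is compact and self-adjoint on $\mathcal{H}$.

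\textbf{Comparison with the discrete operators.} Let $P_{h,L}$ be the $A$-orthogonal projection of $\mathcal{H}$ onto $X_h$. For $u_h,v_h\in X_h$,
\[
  \langle K_{h,L}u_h,v_h\rangle_A = \int V_2 u_h v_h = \langle K_\infty u_h,v_h\rangle_A ,
\]
so $K_{h,L} = P_{h,L}K_\infty|_{X_h}$. Here it matters that the $A$-form on $\Omega_L$ is the restriction of the one on $\R^d$, which holds because functions vanish outside $\Omega_L$. Exactly as in Part~(ii) of Theorem~\ref{thm:pcg}, $\|P_{h,L}\|=1$ gives $s_n(P_{h,L}K_\infty P_{h,L})\le s_n(K_\infty)$, hence $|\mu_n(K_{h,L})|\le|\mu_n(K_\infty)|$. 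The number of eigenvalues of $K_\infty$ with modulus at least $\epsilon$ is a finite number $M(\epsilon)$ that depends on neither $h$ nor $L$. Therefore $T_{h,L}=I+K_{h,L}$ has at most $M(\epsilon)$ eigenvalues outside $(1-\epsilon,1+\epsilon)$.

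\textbf{Condition number.} I use the min-max argument from Part~(iii) of Theorem~\ref{thm:pcg}, now over $X_h\subset\mathcal{H}$. This gives $\mu_{\min}(K_\infty)\le\mu_{\min}(K_{h,L})$ and $\mu_{\max}(K_{h,L})\le\mu_{\max}(K_\infty)$. The hypothesis $-\Delta+V_1+V_2>0$ on $\R^d$ should be read as $\mu_{\min}(I+K_\infty)>0$. Since $K_\infty$ is compact and self-adjoint, its spectral infimum is either $0$ or an eigenvalue greater than $-1$, and I will argue that positivity of the form excludes the value $-1$. Under that reading, $\kappa(T_{h,L})\le\kappa(I+K_\infty)<\infty$ uniformly in $h$ and $L$. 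The point I expect to need care is interpreting the positivity hypothesis as a strict lower bound for the form $\langle (I+K_\infty)u,u\rangle_A \ge c\|u\|_A^2$ with $c>0$. Compactness makes this equivalent to injectivity of $I+K_\infty$ together with nonnegativity, so I would state it in that form.
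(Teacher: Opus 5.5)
Your proof is correct, but it is organized differently from the paper's. The paper works domain by domain. From the compact resolvent of $-\Delta+V_1$ on $\R^d$ and min-max it gets $\lambda_n(-\Delta_L+V_1)\ge\Lambda_n$, hence $s_n(\iota)\le\Lambda_n^{-1/2}$ and $\|R\|\le\Lambda_1^{-1/2}$ on each $\Omega_L$. The multiplicative singular value inequality then gives $|\mu_n(K_L)|\le\|V_2\|_{L^\infty}/\sqrt{\Lambda_1\Lambda_n}$ for the continuous operator on $\Omega_L$, and only afterwards is this compressed to $X_h$. For the condition number it chains $\kappa(T_{h,L})\le\kappa(T_L)\le\kappa(T_{\R^d})$, using extension by zero only in the second step. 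You instead embed every $X_h$, for every $L$, isometrically into the single space $\mathcal{H}$ and identify $K_{h,L}$ as a compression of one fixed compact self-adjoint $K_\infty$. Both the clustering count and the condition-number bound then follow from one compression argument (singular values for the first, Rayleigh quotients for the second), and the intermediate operators $K_L$, $T_L$ never appear. This gives a shorter and more self-contained proof. You prove directly the compactness of $\mathcal{H}\hookrightarrow L^2(\R^d)$, which is exactly the compact-resolvent fact the paper cites. You supply the Poincar\'e-type control that the paper encodes tacitly as $\Lambda_1>0$. You also make explicit that strict positivity of $-\Delta+V_1+V_2$ bounds $I+K_\infty$ below by a positive constant only because $K_\infty$ is compact, a step the paper leaves implicit when it bounds by $\kappa(T_{\R^d})$. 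For that step, evaluate the form at an eigenvector of the most negative eigenvalue $\mu$ of $K_\infty$ to get $1+\mu>0$ directly; what must be excluded is every $\mu\le-1$, not just the value $-1$. In return, the paper's route gives an explicit count, $M(\epsilon)\le\#\{n:\Lambda_n\le\|V_2\|_{L^\infty}^2/(\Lambda_1\epsilon^2)\}$, in terms of the spectrum of $-\Delta+V_1$ alone (computable, e.g., for a harmonic trap). Your $M(\epsilon)$ is the less explicit number of eigenvalues of $K_\infty$ of modulus at least $\epsilon$. You can recover the explicit bound by applying the same singular value inequality to $K_\infty=\iota^*(V_2\,\cdot)\,\iota$ on $\mathcal{H}$.
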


\begin{proof}
Since $V_1$ is confining, the Schr\"odinger operator
$-\Delta + V_1$ on $\R^d$ has compact resolvent and therefore
purely discrete spectrum
$\Lambda_1 \le \Lambda_2 \le \cdots \to \infty$ ~\cite[Theorem~XIII.16]{reedsimon1978}.
By the min-max principle, the $n$-th eigenvalue
of $-\Delta + V_1$ on $\Omega_L$ satisfies
\[
  \lambda_n(-\Delta_L+V_1)
  = \min_{\substack{\dim S = n \\ S \subset H^1_0(\Omega_L)}}
  \;\max_{u \in S,\, \|u\|_{L^2}=1}
  \|u\|_A^2.
\]
Restricting to
$H^1_0(\Omega_L) \subset H^1(\R^d)$ reduces the set of
candidate subspaces for the outer min, so the eigenvalues
can only increase:
$\lambda_n(-\Delta_L + V_1) \ge \Lambda_n$ for all $n$ and $L$.
The operator norm of the mapping $(-\Delta_L + V_1)^{-1}: L^2(\Omega_L) \to L^2(\Omega_L)$ satisfies
\begin{equation}\label{eq:resolvent_bound}
  \|(-\Delta_L + V_1)^{-1}\|_{L^2 \to L^2}
  = \frac{1}{\lambda_1(-\Delta_L + V_1)}
  \le \frac{1}{\Lambda_1},
  \quad \text{independently of } L.
\end{equation}
By contrast, for $\Omega_L=[-L,L]^d$,
$\|(-\Delta_L)^{-1}\|_{L^2 \to L^2} = 1/\lambda_1(-\Delta_L)
\sim L^2/\pi^2 \to \infty$.

Recall that Lemma~\ref{lem:operator_framework} gives
$K_L = R \circ (V_2\,\cdot) \circ \iota$, where
$V_2\,\cdot$ is bounded with norm $\|V_2\|_{L^\infty}$.
By the definition of $R$ with test function $Rf$,
$\|Rf\|_A^2 = (f, Rf)_{L^2}
\le \|f\|_{L^2}\,\|Rf\|_{L^2}
\le \|f\|_{L^2}\cdot\Lambda_1^{-1}\|f\|_{L^2}$
by~\eqref{eq:resolvent_bound}, so
$\|Rf\|_A \le \Lambda_1^{-1/2}\|f\|_{L^2}$. Thus
$R$ is bounded as a map $L^2(\Omega_L) \to
(H^1_0(\Omega_L), \|\cdot\|_A)$ with norm
$\le 1/\sqrt{\Lambda_1}$.

The singular values of the compact embedding $\iota : (H^1_0(\Omega_L), \|\cdot\|_A)
\hookrightarrow L^2(\Omega_L)$ are determined as follows.
The adjoint $\iota^* : L^2(\Omega_L) \to H^1_0(\Omega_L)$
satisfies $\langle \iota^* f, v\rangle_A = (f,v)_{L^2}$ for all
$v \in H^1_0(\Omega_L)$, so $\iota^* f$ solves
$(-\Delta + V_1)w = f$ weakly, i.e.,
$\iota^* = (-\Delta_L + V_1)^{-1}$ as a map
$L^2 \to H^1_0$.
Therefore $\iota^*\iota : H^1_0 \to H^1_0$ has eigenvalues
$1/\lambda_n(-\Delta_L + V_1)$, and the singular values of
$\iota$ are
$s_n(\iota) = 1/\sqrt{\lambda_n(-\Delta_L + V_1)}\leq 1/\sqrt{\Lambda_n}$.
Since $K_L$ is self-adjoint, its singular values equal the
absolute values of its eigenvalues, and the multiplicative
singular value inequality gives
\[
  |\mu_n(K_L)|
  \le \frac{\|V_2\|_{L^\infty(\R^d)}}
  {\sqrt{\Lambda_1\,\Lambda_n}}.
\]
Therefore $|\mu_n(K_L)| \ge \epsilon$ requires
$\Lambda_n \le \|V_2\|_{L^\infty}^2/(\Lambda_1\,\epsilon^2)$,
which holds for at most $M(\epsilon)$ values of $n$,
independently of $L$.
This bound transfers to the discrete operators.
Define $\widetilde{K}_{h,L} = \Pi_h K_L \Pi_h$
as in Theorem~\ref{thm:pcg}.
Since $\|\Pi_h\| = 1$ (orthogonal projection), the singular value
submultiplicativity gives
$s_n(\Pi_h K_L \Pi_h) \le s_n(K_L)$.
For self-adjoint operators $|\mu_n| = s_n$, so
$|\mu_n(K_{h,L})| \le |\mu_n(K_L)|$ for each $n$.
Hence $K_{h,L}$ also has at most $M(\epsilon)$ eigenvalues
outside $[-\epsilon,\epsilon]$, independently of both $h$ and $L$.
 
For the condition number bound,
Theorem~\ref{thm:pcg}(iii) gives
$\kappa(T_{h,L}) \le \kappa(T_L)$ for each $L$.
It remains to show $\kappa(T_L)$ is bounded independently
of $L$.
Let $K_{\R^d} = (-\Delta + V_1)^{-1}V_2$ on $\R^d$
(well-defined since $V_1$ is confining).
Any $u \in H^1_0(\Omega_L)$, extended by zero to
$\tilde{u} \in H^1(\R^d)$, satisfies
$\|\tilde{u}\|_A = \|u\|_A$ and
$\langle K_{\R^d}\tilde{u},\tilde{u}\rangle_A
= \int V_2|\tilde{u}|^2 = \int_{\Omega_L} V_2|u|^2
= \langle K_L u, u\rangle_A$.
Thus the Rayleigh quotients of $K_L$ over $H^1_0(\Omega_L)$
are a subset of those of $K_{\R^d}$ over $H^1(\R^d)$, giving
$\mu_{\min}(K_L) \ge \mu_{\min}(K_{\R^d})$ and
$\mu_{\max}(K_L) \le \mu_{\max}(K_{\R^d})$.
Therefore $\kappa(T_L) \le \kappa(T_{\R^d})$ for all $L$,
and $\kappa(T_{h,L}) \le \kappa(T_{\R^d})$
for all $h$ and $L$.
\end{proof}

\begin{remark}[Grid-independent PCG convergence]
\label{rem:pcg_discussion}
For the stirrer potential~\eqref{eq:stirrer}, $V_1$ is
confining and $V_2 \in L^\infty(\R^d)$, so
Theorem~\ref{cor:confining} guarantees that the eigenvalues of
$T_{h,L}$ cluster near $1$ with at most $M(\epsilon)$ outliers, and $\kappa(T_{h,L})$ is bounded, 
independently of both $h$ and $L$, which explains
mesh- and domain-independent PCG iteration counts. In particular,
 CG effectively ignores finitely many outlier eigenvalues ~\cite{vanderSluis1986},
which can explain the fast convergence (5--6 iterations)
observed in Example~\ref{ex:pcg}
(Figure~\ref{fig:pcg_convergence}, bottom right),
as well as the 5 iterations on $\R^3$ with the Hermite spectral
method (Figure~\ref{fig:pcg_hermite} in
Appendix~\ref{app:hermite}).
For the quartic potential~\eqref{eq:quartic},
$V_2 \notin L^\infty(\R^d)$, so only Theorem~\ref{thm:pcg}
applies.
For each fixed $L$,
$\kappa(T_h) \le \kappa(T)$ ensures mesh-independent
iteration counts
(Figure~\ref{fig:pcg_convergence}, top left),
but $\kappa(T_L)$
grows with $L$, and iterations increase as $L$
increases (Figure~\ref{fig:pcg_convergence}, bottom left).
\end{remark}

\section{Ground State Computation}
\label{sec:gpe_app}

\subsection{Linear ground state via shifted inverse iteration}
\label{sec:direct_inv}

When $V = V_1$ is fully separable, the tensor-product solver
computes $(-\Delta + V_1)^{-1} b$ directly, which can be used in shifted inverse iteration
(power method for $(-\Delta + V_1 - \sigma)^{-1}$) for computing
the ground state eigenvector of $(-\Delta + V_1)u = \lambda u$.
We use the  potential in~\cite{ChenLuLuZhang2024}:
\begin{equation}\label{eq:separable_V1}
   V_1(x,y,z) = \sum_{k=1}^{3}(x_k^2 + 100\sin^2(\pi x_k/4)).
\end{equation} 
\begin{table}[ht]\centering
\caption{Shifted inverse iteration for the ground state of
$(-\Delta + V_1)\,u = \lambda\,u$ on $[-8,8]^3$,
$Q^{10}$ SEM, FP64.
Potential~\eqref{eq:separable_V1}.
Shift $\sigma = 0.9\,\lambda_{\min}(-\Delta + V_1)$.
Convergence criterion $|\Delta\lambda|/|\lambda| < 10^{-12}$.
Ground state eigenvalue $\lambda_1 = 23.2878438176$.}
\label{tab:inverse_iter}
{\small
\begin{tabular}{@{}llrrrrr@{}}
\toprule
GPU & DoFs & Setup (s) & JIT (s)$^\dagger$ & Iters / time (s) & Peak mem (GB) \\
\midrule
A100  & $599^3$  & 6.2 & 1.0 & 9\,/\,1.25 & 11.2 \\
A100  & $999^3$  & 6.4 & 4.3 & 9\,/\,7.61 & 52.1 \\
GH200 & $999^3$  & 9.3 & 0.9 & 9\,/\,3.50 & 52.1 \\
GH200 & $1099^3$ & 9.2 & 1.2 & 9\,/\,5.03 & 69.4 \\
\bottomrule
\end{tabular}
}

\medskip\footnotesize
$\dagger$\, JAX/XLA compilation time.
At $n{=}999$: GH200 is $2.2{\times}$ faster per solve than A100
(0.39\,s vs.\ 0.85\,s).
$1.33\times10^9$ DoFs converge in 5\,s of iteration time on a single GH200.
\end{table}
 
Table~\ref{tab:inverse_iter} shows that the shifted inverse iteration
converges in 9 iterations regardless of grid size, producing the
ground state of $(-\Delta + V_1)u = \lambda u$.
This eigenfunction will serve as a natural initial guess for the
nonlinear GPE gradient flow below.

\subsection{GPE ground state via Sobolev gradient flow}

We consider the ground state of the defocusing ($\beta > 0$)
Gross--Pitaevskii eigenvalue problem
\begin{equation}\label{eq:gpe}
  -\Delta u + V(\boldsymbol{x})\,u + \beta\,|u|^2\,u = \lambda\,u,
  \quad \boldsymbol{x} \in \Omega \subset \R^3,
  \quad \|u\|_{L^2} = 1,
\end{equation}
with  potential $V \ge 0$ and the parameter $\beta > 0$.
The ground state minimizes  
\begin{equation}\label{eq:gpe_energy}
E(u) = \frac{1}{2}\int_\Omega \bigl(|\nabla u|^2 + V|u|^2\bigr)\,d\boldsymbol{x}
+ \frac{\beta}{4}\int_\Omega |u|^4\,d\boldsymbol{x}.
\end{equation}
For $\beta > 0$, $E$ has a unique positive ground
state~\cite{henning2020sobolev,ChenLuLuZhang2024}, thus 
all computations in this section use real arithmetic.
The ground state can be computed by the normalized gradient flow
(GFDN)~\cite{bao2004computing} or by projected Sobolev gradient
flows~\cite{henning2020sobolev,danaila2017,ChenLuLuZhang2024}.

\subsection{Riemannian gradient flows}
The Sobolev gradient
flows can also be perceived as Riemannian gradient descent methods.
We consider two Riemannian gradient flows on the
$L^2$ unit sphere $\mathcal{S} = \{u : \|u\|_{L^2} = 1\}$.
Both take the form
\begin{equation}\label{eq:riemannian_update}
  u^{n+1} = \frac{u^n - \tau\,\nabla_g^{\mathcal{R}} E(u^n)}
  {\|u^n - \tau\,\nabla_g^{\mathcal{R}} E(u^n)\|_{L^2}},
\end{equation}
where $\nabla_g^{\mathcal{R}} E$ is the Riemannian gradient
with respect to the metric~$g$, and $\tau > 0$ is the step size.
They differ only in the choice of~$g$. We consider just two of them.

\medskip\noindent
\textbf{Modified $H^1$ flow}~\cite{ChenLuLuZhang2024}.
The metric is
\begin{equation}\label{eq:h1_metric}
  g_{H^1}(w, z) = (\nabla w, \nabla z) + \alpha\,(w, z),
  \qquad \alpha > 0.
\end{equation}
Each iteration requires solving
$(-\Delta + \alpha I)\,w = (-\Delta + V + \beta u^2)\,u$
and a projection step, amounting to two direct solves
of $(-\Delta + \alpha I)^{-1}$ per iteration,
a separable tensor-product operation at cost $O(N^{4/3})$ in 3D.

\medskip\noindent
\textbf{$a_u$ flow}~\cite{henning2020sobolev}.
The metric depends on the current iterate~$u$:
\begin{equation}\label{eq:au_metric}
  g_{a_u}(w, z) = (\nabla w, \nabla z) + (w, V\,z)
  + \beta\,(w, u^2\,z).
\end{equation}
Each iteration requires solving
$(-\Delta + V + \beta u^2)^{-1}$.
We solve this by PCG with preconditioner $(-\Delta + V)^{-1}$,
which is a separable tensor-product solve. We refer to~\cite{henning2020sobolev,ChenLuLuZhang2024} for more details about $a_u$ flow.
The $a_u$ metric adapts to the current iterate. It reduces the
iteration count, but each step is more expensive because of the PCG
solve.
The tensor-product solver enters both methods:
(i) shifted inverse iteration provides the eigenfunction of the
linearized problem $(-\Delta + V_1)u = \lambda u$ as an initial guess
(Section~\ref{sec:direct_inv}),
(ii) the modified $H^1$ flow requires $(-\Delta + \alpha I)^{-1}$
(direct solve), and
(iii) the $a_u$ flow uses $(-\Delta + V)^{-1}$ as a PCG preconditioner.

\subsection{Numerical results}

When FP32 is used below for the PCG solves in the $a_u$ flow, the
offline eigendecompositions per axis are still
computed in FP64. We consider the ground state of  \eqref{eq:gpe} for a potential $V=V_1$ with $V_1$ given in \eqref{eq:separable_V1}. 

\begin{example}[GPE ground state with eigenfunction initial guess]
\label{ex:gpe}
Since the GPE ground state approaches the linear ground state
as $\beta \to 0$, the eigenfunction from
Table~\ref{tab:inverse_iter} provides a good initial guess for
moderate $\beta$.
Table~\ref{tab:h1_flow} compares the modified $H^1$
flow~\cite{ChenLuLuZhang2024} with two initial guesses:
a constant function $u_0 \equiv c$ (normalized s.t. $\|u_0\|_{L^2}=1$) and the linear ground
state from shifted inverse iteration.
Both use $Q^{20}$ SEM with $599^3$ DoFs,
$\alpha = 20$, $\tau = 0.1$, tested for
$\beta = 10$ and $100$ on an NVIDIA GH200 96\,GB GPU.
For moderate $\beta$, the nonlinear ground state remains close to the
linear eigenfunction, so the eigenfunction initial guess starts the
flow near the solution. Figure~\ref{fig:gpe_states} illustrates the progression: at $\beta = 100$
the lattice structure is already emerging but the solution remains
concentrated near the center, while at $\beta = 1600$ the ground state
has spread into a broad  profile far from the linear eigenfunction.
As shown in Table~\ref{tab:h1_flow}, for $\beta=10, 100$, modified $H^1$ flow with both initializations converges to the same
energy and eigenvalue, and the eigenfunction initial guess
saves 35--71\% of total time with 39--77\% fewer iterations.
Figure~\ref{fig:h1_convergence} illustrates that both $H^1$ flow and $a_u$ flow for $\beta = 100$ can benefit from
the eigenfunction initial guess. But for $\beta=1600$, as shown in  Table~\ref{tab:au_vs_h1} ($N=599^3$) and Figure~\ref{fig:au_vs_h1} ($N=999^3$), the eigenfunction is no longer a good initial guess. Both Figure~\ref{fig:h1_convergence} and Figure~\ref{fig:au_vs_h1} also show that $a_u$ flow can be efficiently implemented by the PCG with the proposed preconditioner $(-\Delta+V_1)^{-1}$. 
\end{example}

\begin{figure}[ht]\centering
\includegraphics[width=0.32\textwidth]{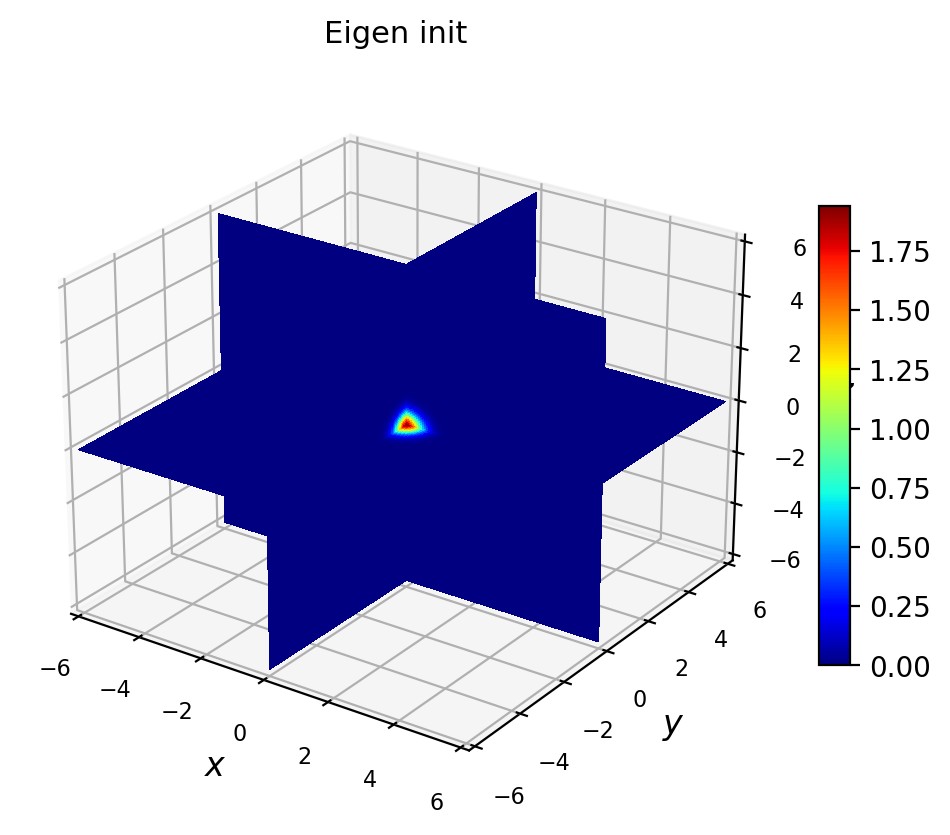}\hfill
\includegraphics[width=0.32\textwidth]{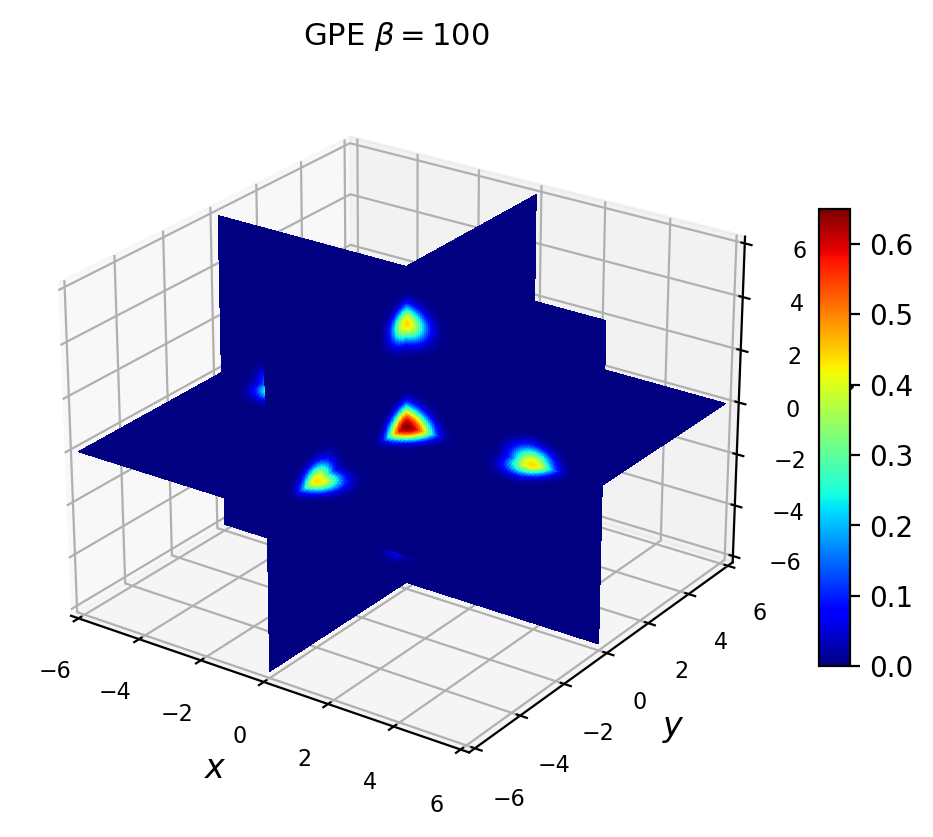}\hfill
\includegraphics[width=0.32\textwidth]{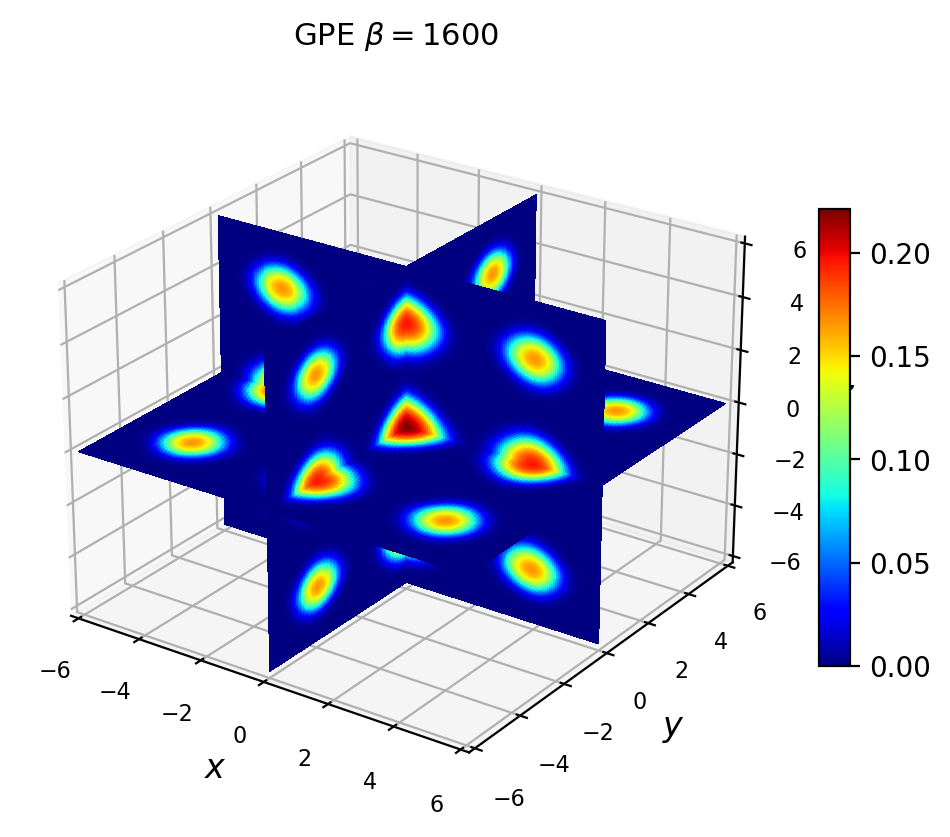}
\caption{Cross-sections of $u$ at $x{=}0$, $y{=}0$, $z{=}0$,
plotted on $[-6,6]^3$.
$Q^{20}$ SEM, $199^3$ DoFs, computed on $[-8,8]^3$.
Each panel uses its own color scale.
\emph{Left:} eigenfunction initial guess (linear ground state, $\beta{=}0$).
\emph{Center:} GPE ground state, $\beta{=}100$.
\emph{Right:} GPE ground state, $\beta{=}1600$.}
\label{fig:gpe_states}
\end{figure}

\begin{table}[ht]\centering
\caption{Modified $H^1$ flow for the GPE ground state on $[-8,8]^3$,
$Q^{20}$ SEM, FP64, $\alpha = 20$, step size $\tau = 0.1$, $599^3$ DoFs,
GH200. Potential~\eqref{eq:separable_V1}.
Convergence criterion:
$|E_k - E^*|/|E^*| < 10^{-14}$. 
The total computation time includes both off-line and iteration time.}
\label{tab:h1_flow}
{\small
\begin{tabular}{@{}rlrrrl@{}}
\toprule
$\beta$ & Init & $H^1$ iters & Total (s) & Energy & Eigenvalue \\
\midrule
10 & constant      & 745 & 122.0 & 14.1965761916 & 32.4916917439 \\
   & eigenfunction & 175 &  34.8 & 14.1965761916 & 32.4916917439 \\
\midrule
100 & constant      & 660 & 111.4 & 20.6824463703 & 47.7831207152 \\
    & eigenfunction & 405 &  71.9 & 20.6824463703 & 47.7831207152 \\
\bottomrule
\end{tabular}
}
\end{table}

\begin{figure}[ht]\centering
\includegraphics[width=\textwidth]{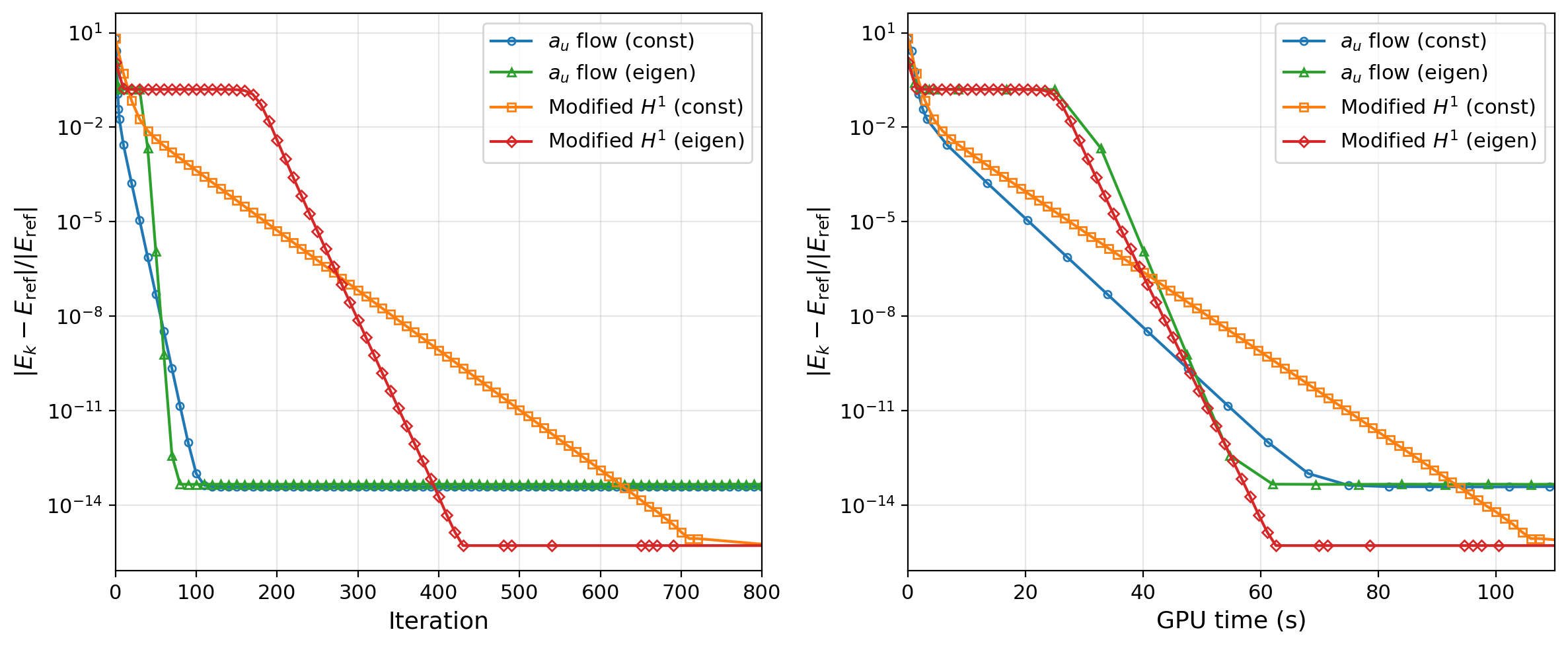}
\caption{Comparison of $a_u$ flow and modified $H^1$ flow
for $\beta = 100$,
$Q^{20}$ SEM, $599^3 \approx 2.1\times10^8$ DoFs, GH200.
Left: relative energy error vs.\ iteration.
Right: relative energy error vs.\ GPU time.}
\label{fig:h1_convergence}
\end{figure}

\begin{figure}[ht]\centering
\includegraphics[width=\textwidth]{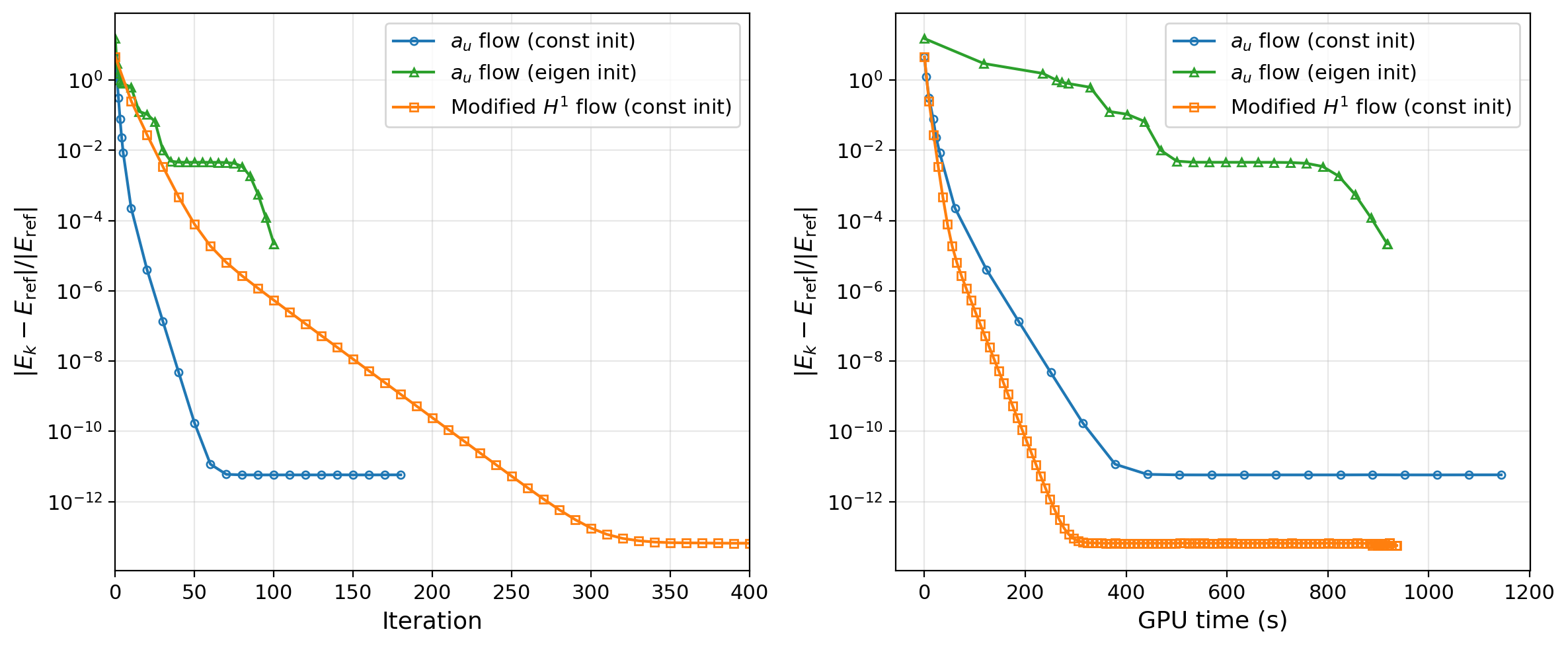}
\caption{Comparison of $a_u$ flow and modified $H^1$ flow for
$\beta = 1600$, $Q^{20}$ SEM, $999^3 \approx 10^9$ DoFs, constant
initial guess, GH200.
Left: relative energy error vs.\ iteration.
Right: relative energy error vs.\ GPU time.
The reference energy $E^* = 33.80227900547$
is from~\cite{ChenLuLuZhang2024}.}
\label{fig:au_vs_h1}
\end{figure}

\begin{table}[ht]\centering
\caption{Comparison of $a_u$ flow and modified $H^1$ flow for
$\beta = 1600$, $Q^{20}$ SEM, $599^3 \approx 2.1\times10^8$ DoFs,
FP64, $\alpha{=}20$, step size $\tau{=}0.1$ for modified $H^1$ and $\tau{=}1$ for $a_u$,
GH200.}
\label{tab:au_vs_h1}
{\small
\begin{tabular}{@{}llrrrr@{}}
\toprule
Method & Initilization & Iters  & Linear solves & Time (s) & Energy \\
\midrule
$a_u$ flow & constant      &  66 &  790 &  72.8 & 33.80227900550 \\
$a_u$ flow & eigenfunction & 147 & 1846 & 171.9 & 33.80227900550 \\
Modified $H^1$ & constant  & 272 &  544 &  42.6 & 33.80227900551 \\
Modified $H^1$ & eigenfunction & \multicolumn{4}{l}{not converging} \\
\bottomrule
\end{tabular}
}

\medskip\footnotesize
Convergence criterion: $|E_k - E^*|/|E^*| < 10^{-12}$,
where $E^* = 33.80227900547423$.
\end{table}

\section{Hamiltonian Simulation in 3D}
\label{sec:hamiltonian}

We consider the Schr\"odinger equation
\begin{equation}\label{eq:tdse}
  i\,\partial_t \psi = H\,\psi, \quad
  H = -\Delta + V, \quad
  \psi(\cdot, 0) = \psi_0,
\end{equation}
on $\Omega \subset \R^3$ with either periodic or Dirichlet boundary
conditions.
When $V$ is separable, the tensor-product solver gives the exact
propagator $e^{-iH\Delta t}$. When $V$ is non-separable, it
provides the propagator for each operator-splitting sub-step.

\subsection{Splitting methods and classical implementation}

We split $H = A + B$ with $A = -\Delta$ (kinetic) and $B = V$
(potential).
The standard Strang splitting
$e^{-iA\Delta t/2}\,e^{-iB\Delta t}\,e^{-iA\Delta t/2}$
is second-order in $\Delta t$, but its operator-norm error
grows with the spectral radius of $[A,B]$, which may scale with $N$.
Two recent quantum algorithms consider the interaction-picture Hamiltonian
$H_I(s) = e^{iAs}\,B\,e^{-iAs}$ and apply the Magnus expansion
to the time-ordered evolution:
\begin{itemize}[nosep]
\item The \emph{qHOP} algorithm~\cite{AnFangLin2022}
  uses the first-order Magnus truncation, achieving
  $O(\Delta t^2)$ with an error constant independent of $N$.
\item The \emph{Magnus-2} algorithm~\cite{FangLiuSarkar2025}
  adds a commutator correction (second-order Magnus),
  achieving $O(\Delta t^4)$
  superconvergence~\cite{BornsWeilFangZhang2026,FangZhang2025preprint}.
\end{itemize}
Both methods require a matrix exponential of a sum of
interaction-picture Hamiltonians, which on a quantum computer
is implemented via LCU~\cite{AnFangLin2022}.
Classically, $e^{iAs}$ is computed by the tensor-product
propagator and $e^{-iB\Delta t}$ is pointwise, so each factor
$\exp(-i\,w_k\Delta t\,H_I(s_k))$ can be evaluated exactly:
\[
  e^{-i\,w_k\Delta t\,H_I(s_k)}
  = e^{iAs_k}\,e^{-i\,w_k\Delta t\,B}\,e^{-iAs_k},
\]
which leads to two families of methods, implemented
via tensor-product propagations.

\medskip\noindent
\textbf{$O(\Delta t^2)$ approximated qHOP via product formula.}
The qHOP approximation~\cite{AnFangLin2022}
replaces the time-ordered exponential by
the first-order Magnus truncation in the interaction picture
(see~\cite[Eq.~(2)]{FangLiuSarkar2025}):
\[
  e^{-iH\Delta t} = e^{-iA\Delta t}\,
  \mathcal{T}\exp\!\Bigl({-i\!\int_0^{\Delta t}\! H_I(s)\,ds}\Bigr)
  \approx e^{-iA\Delta t}\,
  \exp\!\Bigl({-i\,\Delta t \sum_{k=1}^{M} w_k\,H_I(s_k)}\Bigr),
\]
where $s_k \in [0, \Delta t]$ are Gauss--Legendre nodes and   $w_k$ ($\sum_{k=1}^M w_k = 1$) are the corresponding normalized Gauss--Legendre weights.
We implement this classically via the product approximation
\begin{equation}\label{eq:qhop_product}
  \exp\!\Bigl(-i\,\Delta t \sum_{k=1}^{M} w_k\,H_I(s_k)\Bigr)
  \approx \prod_{k=1}^{M} e^{-i\,w_k\Delta t\,H_I(s_k)}.
\end{equation}
Since $H_I(s_k) = e^{iAs_k}B\,e^{-iAs_k}$,
each factor in the product is computed \emph{exactly} as
$e^{-i\,w_k\Delta t\,H_I(s_k)}
= e^{i A s_k}\,e^{-i\,w_k\Delta t\,B}\,e^{-i A s_k}$
(two tensor-product propagations plus one pointwise
multiplication).
Combining~\eqref{eq:qhop_product} with
$e^{-iH\Delta t} = e^{-iA\Delta t}\,
\mathcal{T}\exp(\cdots)$
gives the full time step
$e^{-iH\Delta t}
\approx e^{-iA\Delta t}\,
\prod_{k=1}^{M} e^{-i\,w_k\Delta t\,H_I(s_k)}$,
where the approximation comes from the quadrature and
product formula~\eqref{eq:qhop_product}.

\begin{remark}[Order of the product formula]
\label{rem:product_order}
The product formula~\eqref{eq:qhop_product} replaces
$\exp(\text{sum})$ by a product of exponentials,
which is in general a first-order Lie--Trotter
approximation with error governed by the commutators
$[A_j, A_k]$ where $A_k = -iw_k\Delta t\,H_I(s_k)$.
In Lie--Trotter splitting of two \emph{different}
operators $A$ and $B$, the commutator $[A,B]$ is a fixed  operator, giving first-order global error $O(\Delta t)$.
Here, however, all the operators $H_I(s_k)$ have
$s_k \in [0,\Delta t]$, so as $\Delta t \to 0$,
$H_I(s_k) \to H_I(0) = B$ for all $k$, and the commutators
$[H_I(s_j), H_I(s_k)] \to [B, B] = 0$.
More precisely, smoothness of $H_I(s)$ in $s$ gives
$[H_I(s_j), H_I(s_k)] = O(|s_j - s_k|) = O(\Delta t)$,
so each $[A_j, A_k]
= w_j w_k \Delta t^2\,[H_I(s_j), H_I(s_k)] = O(\Delta t^3)$.
The product formula error is therefore $O(\Delta t^3)$
per step and $O(\Delta t^2)$ globally,
matching the qHOP approximation order.
\end{remark}

\medskip\noindent
\textbf{$O(\Delta t^4)$ approximated Magnus-2 via Yoshida composition.}
The Magnus-2 algorithm~\cite{FangLiuSarkar2025} replaces the
first-order Magnus truncation with the second-order one:
\[
  e^{-iH\Delta t}
  \approx e^{-iA\Delta t}\,
  \exp\!\bigl(\Omega_1 + \Omega_2\bigr),
\]
where $\Omega_1 = -i\,\Delta t\sum_{k=1}^M w_k\,H_I(s_k)$
is the qHOP exponent and
\[
  \Omega_2 = -\tfrac{1}{2}\int_0^{\Delta t}\!\int_0^{s_1}
  [H_I(s_1),H_I(s_2)]\,ds_2\,ds_1
\]
is the commutator correction, achieving $O(\Delta t^4)$
superconvergence.
On a quantum computer, $e^{\Omega_1+\Omega_2}$ is implemented
via LCU.
Classically, it is impractical to compute $e^{\Omega_1+\Omega_2}$ directly.
Instead, we apply the fourth-order Yoshida
splitting~\cite{yoshida1990} to the approximated qHOP:
\begin{equation}\label{eq:yoshida}
  e^{-iH\Delta t}
  \approx
  P(\gamma_1\Delta t)\,
  P(\gamma_2\Delta t)\,
  P(\gamma_1\Delta t),
  \qquad
  \gamma_1 = \frac{1}{2 - 2^{1/3}}, \quad
  \gamma_2 = \frac{-2^{1/3}}{2 - 2^{1/3}},
\end{equation}
where $P(h)$ denotes one approximated qHOP step with
step size $h$, i.e.,
$P(h) = e^{-iAh}\prod_{k=1}^{M}
e^{-i\,w_k h\,H_I(s_k)}$,
and $\gamma_1 + \gamma_2 + \gamma_1 = 1$.
Yoshida cancels the $O(h^3)$ leading error
($2\gamma_1^3 + \gamma_2^3 = 0$),
yielding $O(\Delta t^4)$ global accuracy,
the same order as the original Magnus-2 method.
Although~\eqref{eq:yoshida} is derived by applying Yoshida
composition to the approximated qHOP rather than by directly
approximating $e^{\Omega_1+\Omega_2}$, it achieves the same
$O(\Delta t^4)$ rate as the original Magnus-2, and can therefore
be regarded as an approximation to  Magnus-2.
\begin{remark}[Classical splitting as limiting cases]
qHOP with $M{=}1$ (midpoint rule) recovers the Strang splitting~\cite{AnFangLin2022}.
At the next order, our approximated Magnus-2 with $M{=}1$ reduces to
the plain Yoshida method.
Thus Strang and Yoshida can be perceived as the $M{=}1$ special
cases of such approximated versions of qHOP and Magnus-2,
respectively, while larger $M$ reduces the error constant.
\end{remark}

\begin{remark}[Cost per step]                                                      
  \label{rem:cost_naive_merged}  
  For $M \ge 2$, the  approximated qHOP in~\eqref{eq:qhop_product} has a direct count of $(2M+1)$ tensor-product propagations per step, i.e., number of    multiplying $e^{\pm iAs_k}$ per step.  For $M = 1$, the optimized Strang form          $e^{-iV\Delta t/2}\,e^{-iA\Delta t}\,e^{-iV\Delta t/2}$  needs only $1$ tensor-product propagation per step. A better implementation for $M>1$ is to merge two adjacent factors, e.g., for $M=3$, the merged form of the qHOP is 
  \[ e^{-iA(\Delta t - s_3)} e^{-iw_3\Delta t\,B} e^{-iA(s_3 - s_2)}\,                 
    e^{-iw_2\Delta t\,B}e^{-iA(s_2 - s_1)} e^{-iw_1\Delta t\,B} e^{-iAs_1},\]                          which has only $(M+1)$ A-propagations per step.                      By merging $e^{-iAs_1}$ at  step $n$  with  $e^{-iA(\Delta t - s_M)}$ at   step $n+1$, the cost would be                                       $M$ A-propagations per time step.
 Applying the same merging strategy to each of the three Yoshida sub-steps and across consecutive time steps, the approximated Magnus-2 costs $3M$ A-propagations per step asymptotically for $M\ge 2$, and $3$ A-propagations plus $3$ pointwise $B$-multiplications per step for $M=1$ (plain Yoshida).   \end{remark}

\subsection{Exact propagator as reference for testing splitting methods}
\label{sec:ham_separable}

When $V = V_1$ is  separable, the eigendecomposition
$H = T\,\Lambda\,T^{-1}$ gives the exact propagator
\begin{equation}\label{eq:exact_prop}
  e^{-iH\Delta t}\,\psi = T\,(e^{-i\Lambda\Delta t} \odot (T^{-1}\,\psi)),
\end{equation}
at cost $O(N^{4/3})$ in 3D.
We test with the separable potential~\eqref{eq:separable_V1}
from  Section~\ref{sec:direct_inv} on $[-8,8]^3$,
Dirichlet BC, $Q^k$ SEM, and the exact
propagator~\eqref{eq:exact_prop} as reference.

Most existing numerical tests of these methods are in 1D with
$N \le 1024$, using dense matrix exponentials (\texttt{expm}).
Our tensor-product solver provides the exact propagator
as a reference solution at ${\sim}10^8$ DoFs, enabling
large-scale 3D validation of the splitting convergence rates.
In the rest of this section, ``qHOP'' and ``Magnus-2'' refer to the
approximated versions implemented via the product
formula~\eqref{eq:qhop_product} and Yoshida
composition~\eqref{eq:yoshida}, respectively.

\begin{example}[Order of approximated qHOP]
\label{ex:splitting}
Table~\ref{tab:splitting} reports qHOP with $M = 1$ (Strang), $3$, $5$, and $7$
on a GH200 GPU. The discretization uses $Q^{10}$ SEM with $499^3$
DoFs, complex128 arithmetic, and $T = 0.1$.
The initial condition is $\psi_0(\boldsymbol{x})
= \prod_{k=1}^{3}\sin(\pi(x_k{+}L)/2L)$.
The observed convergence is $O(\Delta t^2)$ for all $M$.
Relative to Strang, the error constant is smaller by
${\sim}5$ ($M{=}3$), ${\sim}13$ ($M{=}5$), and ${\sim}24$ ($M{=}7$). See Remark \ref{rem:cost_naive_merged} 
for cost comparison.
\end{example}

\begin{table}[ht]\centering
\caption{qHOP splitting error $\|\psi_{\rm split}(T){-}\psi_{\rm exact}(T)\|_2$
on GH200, $Q^{10}$ SEM, $499^3$ DoFs, complex128, $T{=}0.1$.
$M{=}1$ is the Strang splitting. $t$\,(s) is the GPU time.}
\label{tab:splitting}
{\small
\begin{tabular*}{\textwidth}{@{\extracolsep{\fill}}rrrrrrrrrrrrrr@{}}
\toprule
& \multicolumn{3}{c}{$M=1$ (Strang)} & \multicolumn{3}{c}{$M=3$} & \multicolumn{3}{c}{$M=5$} & \multicolumn{3}{c}{$M=7$} \\
\cmidrule(lr){2-4}\cmidrule(lr){5-7}\cmidrule(lr){8-10}\cmidrule(lr){11-13}
$\Delta t$ & Error & Rate & $t$ (s) & Error & Rate & $t$ (s) & Error & Rate & $t$ (s) & Error & Rate & $t$ (s) \\
\midrule
0.1   & 5.85e-1 & --   & 0.1 & 1.07e-1 & --   & 0.4 & 4.25e-2 & --   & 0.5 & 2.28e-2 & --   & 0.7 \\
0.01  & 5.22e-3 & 2.05 & 0.9 & 1.02e-3 & 2.02 & 3.0 & 4.11e-4 & 2.01 & 4.8 & 2.21e-4 & 2.01 & 6.7 \\
0.005 & 1.30e-3 & 2.01 & 1.9 & 2.55e-4 & 2.00 & 5.8 & 1.03e-4 & 2.00 & 9.6 & 5.52e-5 & 2.00 & 13  \\
0.001 & 5.21e-5 & 2.00 & 9.5 & 1.02e-5 & 2.00 & 29  & 4.11e-6 & 2.00 & 48  & 2.21e-6 & 2.00 & 67  \\
\bottomrule
\end{tabular*}
}
\end{table}

\subsection{Non-separable potential: manufactured exact solution}
\label{sec:ham_nonseparable}

When $V = V_1 + V_2$ is not fully separable,  we use the PCG solver from Section~\ref{sec:pcg_inv} to
compute the ground state $(\lambda_1, u_1)$ of
$H = -\Delta + V_1 + V_2$.  The function
\begin{equation}\label{eq:manufactured}
  \psi_{\rm exact}(\boldsymbol{x}, t) = e^{-i\lambda_1 t}\,u_1(\boldsymbol{x})
\end{equation}
is a stationary solution of~\eqref{eq:tdse}, against which
we verify splitting convergence orders. We test with the stirrer potential~\eqref{eq:stirrer}
on $[-8,8]^3$, Dirichlet BC, $Q^{20}$ SEM, and we also consider splitting
$H = A + B$ with $A = -\Delta + V_1$ 
and $B = V_2$.

\subsubsection{Multi-level generation of the reference solution}

The ground state $(\lambda_1, u_1)$ of $H = A + B$ is computed by
shifted inverse iteration: each step solves
$(H - \sigma I)\,w = u$ via PCG with the tensor-product
preconditioner $A^{-1} = (-\Delta + V_1)^{-1}$
from Section~\ref{sec:pcg_inv}.
The shift $\sigma = 0.9\,\lambda_{\min}(A)$ keeps $H - \sigma I$
positive definite (since $V_2 \ge 0$) and close enough to $\lambda_1$
for rapid convergence.
The initial guess is the ground state eigenfunction of $A$, obtained
from the tensor-product eigendecomposition.

To efficiently reach large grid sizes, we use a multi-level
strategy: solve on a coarse grid, interpolate the eigenvector to the
next finer grid via tensor-product linear interpolation, and continue the inverse iteration.
Each refinement inherits a good initial guess, reducing both the number
of inverse iterations and the PCG iterations per step.
Table~\ref{tab:multilevel} demonstrates this for the stirrer
potential~\eqref{eq:stirrer} on an A100 GPU.
At the finest level ($499^3$ DoFs), only 11 inverse iterations with
21~PCG iterations each are needed, a total of 231 applications
of $(-\Delta + V_1)^{-1}$.
The entire multi-level computation takes about one minute, producing
a ground state accurate to $|\Delta\lambda|/|\lambda| < 10^{-13}$.

\begin{table}[ht]\centering
\caption{Multi-level shifted inverse iteration for the ground state of
$H = -\Delta + V_1 + V_2$,
stirrer potential~\eqref{eq:stirrer}, $Q^{20}$ SEM, FP64, A100.
Preconditioner: $(-\Delta + V_1)^{-1}$, PCG tolerance $10^{-12}$,
shift $\sigma = 0.9\,\lambda_{\min}(A)$.
Ground state eigenvalue $\lambda_1 = 5.286155366963$.}
\label{tab:multilevel}
{\small
\begin{tabular}{@{}rrrrrr@{}}
\toprule
$n$ (DoFs) & Setup (s) & Interpolation (s) & Inverse iters & PCG/iter & Total $A^{-1}$ \\
\midrule
$99^3$  & 6.1 & ---  & 30 & 75 & 2242 \\
$199^3$ & 2.6 & 0.3  & 21 & 24 & 504  \\
$499^3$ & 2.6 & 0.5  & 11 & 21 & 231  \\
\bottomrule
\end{tabular}
}

\medskip\footnotesize
Initial guess at $99^3$: ground state of $A = -\Delta + V_1$.
{\bf Total wall time: ${\sim}62$\,s on A100}.
\end{table}
 
\subsubsection{Testing splitting methods for Hamiltonian simulations}

\begin{example}[Order of approximated qHOP for non-separable stirrer potential]
\label{ex:splitting_nonsep}
We test qHOP with $M = 1$ (Strang), $3$, $5$, and $7$,
$499^3$ DoFs, complex128, $T = 0.1$, on a GH200 GPU.
The initial condition is $\psi_0 = u_1$ (ground state eigenvector,
$\lambda_1 = 5.286155366963$).
Table~\ref{tab:splitting_nonsep} compares two splittings:
$A = -\Delta$, $B = V_1 + V_2$ (top half), and
$A = -\Delta + V_1$, $B = V_2$ (bottom half),
where $A$ includes the separable part of the potential.
Both show $O(\Delta t^2)$ convergence for all $M$, but the
splitting $A = -\Delta + V_1$ gives ${\sim}1.5$--$3{\times}$
smaller errors because $\|V_2\| \ll \|V_1 + V_2\|$.
For qHOP as a quantum algorithm, $A = -\Delta$ is required
since $A$ must be the operator that can be fast-forwarded
on a quantum computer.
For classical ODE/PDE solvers, including the separable potential
in $A$ is advantageous.
\end{example}

\begin{table}[ht]\centering
\caption{qHOP splitting error for the non-separable
stirrer potential~\eqref{eq:stirrer},
$Q^{20}$ SEM, $499^3$ DoFs,
complex128, $T{=}0.1$, GH200. $t$\,(s) is the GPU time. 
Reference: manufactured solution~\eqref{eq:manufactured}.}
\label{tab:splitting_nonsep}
{\small
\begin{tabular*}{\textwidth}{@{\extracolsep{\fill}}rrrrrrrrrrrrrr@{}}
\toprule
& \multicolumn{3}{c}{$M=1$ (Strang)} & \multicolumn{3}{c}{$M=3$} & \multicolumn{3}{c}{$M=5$} & \multicolumn{3}{c}{$M=7$} \\
\cmidrule(lr){2-4}\cmidrule(lr){5-7}\cmidrule(lr){8-10}\cmidrule(lr){11-13}
$\Delta t$ & Error & Rate & $t$ (s) & Error & Rate & $t$ (s) & Error & Rate & $t$ (s) & Error & Rate & $t$ (s) \\
\midrule
\multicolumn{13}{c}{\textit{Splitting $A = -\Delta$, $B = V_1 + V_2$}} \\
\midrule
0.1   & 8.04e-3 & --   & 0.7 & 3.93e-4 & --   & 1.0 & 1.58e-4 & --   & 1.1 & 8.49e-5 & --   & 1.4 \\
0.01  & 7.73e-5 & 2.02 & 1.6 & 3.90e-6 & 2.00 & 3.5 & 1.57e-6 & 2.00 & 5.6 & 8.44e-7 & 2.00 & 7.8 \\
0.005 & 1.93e-5 & 2.00 & 2.6 & 9.74e-7 & 2.00 & 6.5 & 3.93e-7 & 2.00 & 11  & 2.11e-7 & 2.00 & 15  \\
0.001 & 7.73e-7 & 2.00 & 11  & 3.94e-8 & 1.99 & 30  & 1.64e-8 & 1.97 & 51  & 9.46e-9 & 1.93 & 72  \\
\midrule
\multicolumn{13}{c}{\textit{Splitting $A = -\Delta + V_1$, $B = V_2$}} \\
\midrule
0.1   & 4.75e-3 & --   & 0.9 & 1.37e-4 & --   & 1.0 & 5.50e-5 & --   & 1.3 & 2.96e-5 & --   & 1.5 \\
0.01  & 4.52e-5 & 2.02 & 1.7 & 1.35e-6 & 2.01 & 3.7 & 5.47e-7 & 2.00 & 5.6 & 2.95e-7 & 2.00 & 7.7 \\
0.005 & 1.13e-5 & 2.00 & 2.6 & 3.40e-7 & 1.99 & 6.6 & 1.38e-7 & 1.99 & 10  & 7.50e-8 & 1.98 & 15  \\
0.001 & 4.52e-7 & 2.00 & 10  & 1.61e-8 & 1.90 & 30  & 8.99e-9 & 1.70 & 49  & 7.19e-9 & 1.46 & 71  \\
\bottomrule
\end{tabular*}
}
\end{table}

\begin{example}[Order of approximated Magnus-2 for non-separable stirrer potential]
\label{ex:magnus2_nonsep}
We use the same setup, but with $T = 1$.
Table~\ref{tab:magnus2_nonsep} compares the two splittings
as in Table~\ref{tab:splitting_nonsep}.
Both show $O(\Delta t^4)$ convergence for $M \ge 3$.
The splitting $A = -\Delta + V_1$, $B = V_2$ again gives smaller
errors, with the advantage growing with $M$.
\end{example}

\begin{table}[ht]\centering
\caption{Magnus-2 via Yoshida splitting error for the
non-separable stirrer potential~\eqref{eq:stirrer}, $Q^{20}$ SEM,
$499^3$ DoFs, complex128, $T{=}1$, GH200. $t$\,(s) is the GPU time.
Top: $A = -\Delta$, $B = V_1 + V_2$.
Bottom: $A = -\Delta + V_1$, $B = V_2$, which gives
${\sim}1.5$--$3{\times}$ smaller errors.
Reference: manufactured solution via multi-level inverse iteration.}
\label{tab:magnus2_nonsep}
{\small
\begin{tabular*}{\textwidth}{@{\extracolsep{\fill}}rrrrrrrrrrrrrr@{}}
\toprule
& \multicolumn{3}{c}{$M=1$ (Yoshida)} & \multicolumn{3}{c}{$M=3$} & \multicolumn{3}{c}{$M=5$} & \multicolumn{3}{c}{$M=7$} \\
\cmidrule(lr){2-4}\cmidrule(lr){5-7}\cmidrule(lr){8-10}\cmidrule(lr){11-13}
$\Delta t$ & Error & Rate & $t$ (s) & Error & Rate & $t$ (s) & Error & Rate & $t$ (s) & Error & Rate & $t$ (s) \\
\midrule
\multicolumn{13}{c}{\textit{Splitting $A = -\Delta$, $B = V_1 + V_2$}} \\
\midrule
0.2   & 8.63e-2 & --   & 2.2 & 5.36e-3 & --   & 5.1 & 4.61e-4 & --   & 8.2 & 1.02e-4 & --   & 11 \\
0.125 & 1.30e-2 & 4.03 & 3.1 & 5.44e-4 & 4.87 & 7.7 & 2.93e-5 & 5.86 & 13  & 1.32e-5 & 4.35 & 18 \\
0.1   & 5.34e-3 & 3.99 & 3.7 & 1.33e-4 & 6.31 & 9.5 & 1.10e-5 & 4.39 & 16  & 5.02e-6 & 4.33 & 22 \\
0.05  & 3.39e-4 & 3.98 & 6.9 & 3.25e-6 & 5.35 & 19  & 6.77e-7 & 4.02 & 30  & 3.00e-7 & 4.06 & 44 \\
\midrule
\multicolumn{13}{c}{\textit{Splitting $A = -\Delta + V_1$, $B = V_2$}} \\
\midrule
0.2   & 2.48e-2 & --   & 2.2 & 5.06e-3 & --   & 5.1 & 4.21e-4 & --   & 8.1 & 7.04e-5 & --   & 11 \\
0.125 & 3.75e-3 & 4.02 & 3.1 & 5.15e-4 & 4.86 & 7.8 & 1.46e-5 & 7.15 & 13  & 7.29e-6 & 4.82 & 17 \\
0.1   & 1.50e-3 & 4.11 & 3.8 & 1.20e-4 & 6.53 & 9.5 & 4.18e-6 & 5.60 & 16  & 2.37e-6 & 5.04 & 21 \\
0.05  & 1.04e-4 & 3.85 & 6.5 & 8.50e-7 & 7.14 & 18  & 2.37e-7 & 4.14 & 30  & 1.29e-7 & 4.20 & 42 \\
\bottomrule
\end{tabular*}
}
\end{table}

\section{Multi-Body Hamiltonian Simulation (4D, 6D, 9D)}
\label{sec:multibody}

In this section we consider multi-particle systems with pairwise
interactions in 4D, 6D, and 9D.
As in Section~\ref{sec:ham_nonseparable}, the reference solution is
$\psi_{\rm exact} = e^{-i\lambda_1 t}\,u_1$, with the ground state
computed by PCG using $(-\Delta + V_{\mathrm{trap}})^{-1}$ as
preconditioner.

\subsection{Two particles in 2D with Coulomb interaction (4D)}
\label{sec:coulomb4d}

We consider two quantum particles in a 2D harmonic trap with
Coulomb interaction.
The Coulomb singularity $1/|\bx_1 - \bx_2|$
at $\bx_1 = \bx_2$ needs be regularized unless special transformation is used.
One approach is the cell-averaged potential of~\cite{coulomb2026},
which integrates $1/|\bx_1 - \bx_2|$ against the basis functions.
We use the simpler soft Coulomb potential
$c/\sqrt{|\bx_1 - \bx_2|^2 + \delta^2}$ with a small  
parameter $\delta > 0$, which is smooth for any $\delta > 0$, so
we expect $O(\Delta t)$ for Lie--Trotter and $O(\Delta t^2)$
for Strang, although Lie--Trotter is only $O(\Delta t^{1/4})$
for the exact (unregularized) Coulomb
potential~\cite{FangWuSoffer2026}.
The equation is
\begin{equation}\label{eq:coulomb}
  i\,\partial_t \psi =
  \bigl(-\Delta_{\bx_1} - \Delta_{\bx_2}
  + V_{\mathrm{trap}}(\bx_1) + V_{\mathrm{trap}}(\bx_2)
  + \tfrac{c}{\sqrt{|\bx_1 - \bx_2|^2 + \delta^2}}\bigr)\psi,
  \quad \bx_1, \bx_2 \in \R^2,
\end{equation}
where $V_{\mathrm{trap}}(\bx) = |\bx|^2$ is a harmonic trap.

\subsubsection{Preconditioner comparison}

To select the best preconditioner for PCG, we solve
$(-\Delta + V_1 + V_2)\,u = f$ with a random right-hand side $f$
on the $49^4$ grid with $\delta = 0.01$, comparing three
preconditioners in Table~\ref{tab:pcg_precond}.
The tensor-product preconditioner
$(-\Delta + V_1)^{-1}$ converges in 24 iterations,
far fewer than the combined preconditioner 
$P_C = (V_1{+}V_2)^{-1/2}(-\Delta)^{-1}(V_1{+}V_2)^{-1/2}$
(443 iterations).
The variant $V_2^{-1/2}(-\Delta{+}V_1)^{-1}V_2^{-1/2}$ does not
converge within 500 iterations.
We therefore use $(-\Delta + V_1)^{-1}$ throughout.

\begin{table}[ht]\centering
\caption{PCG iteration count for solving
$(-\Delta + V_1 + V_2)\,u = f$ with three preconditioners.
$Q^{10}$ SEM, $49^4$ DoFs, $\delta = 0.01$, FP64, A100.
Tolerance $10^{-12}$, max 500 iterations.}
\label{tab:pcg_precond}
{\small
\begin{tabular}{@{}lrrl@{}}
\toprule
Preconditioner & Iters & Time (s) & Status \\
\midrule
$(-\Delta + V_1)^{-1}$ & 24 & 0.5 & converged \\
$(V_1{+}V_2)^{-1/2}(-\Delta)^{-1}(V_1{+}V_2)^{-1/2}$ & 443 & 2.9 & converged \\
$V_2^{-1/2}(-\Delta{+}V_1)^{-1}V_2^{-1/2}$ & 500 & 3.0 & stalled at $5 \times 10^{-10}$ \\
\bottomrule
\end{tabular}
}
\end{table}

\subsubsection{Ground state computation}

Table~\ref{tab:coulomb_ground_99}
shows the ground state computation cost for several values
of $\delta$, with $c = 1$, $V_{\mathrm{trap}}(\bx) = |\bx|^2$,
domain $[-8,8]^2 \times [-8,8]^2$.
PCG is warm-started with the previous inverse iteration solution,
which reduces iterations after the first few steps.

\begin{table}[ht]\centering
\caption{Ground state of $H = -\Delta + V_{\mathrm{trap}} + V_{\rm Coulomb}$
via shifted inverse iteration with PCG
(preconditioner $(-\Delta + V_{\mathrm{trap}})^{-1}$, max $500$ iterations,
warm-started with the previous iteration's solution).
$Q^{10}$ SEM, $99^4 \approx 9.6 \times 10^7$ DoFs, GH200.
Multi-level: coarse $49^4$ solution interpolated as initial guess;
shift $\sigma = \lambda_{\min}(-\Delta+V_{\mathrm{trap}}) - 10^{-4} = 3.9999$.
PCG tolerance $10^{-9}$.}
\label{tab:coulomb_ground_99}
{\small
\begin{tabular}{@{}rrrrrrl@{}}
\toprule
$\delta$ & $V_{\rm C}^{\max}$ & Inv.\ iters & PCG/iter & Total solves & Time (s) & $\lambda_1$ \\
\midrule
0.1    & 10      & 22 & 1--18       & 121         & 8      & 5.060514417326 \\
0.01   & 100     & 24 & 1--32       & 204         & 12     & 5.221829172892 \\
0.001  & 1{,}000 & 12 & {1--500$^\dagger$}  & 4{,}286     & 208    & 5.400105827385 \\
0.0001 & 10{,}000 & 12 & {1--500$^\dagger$} & 5{,}816     & 274    & 5.444872472429 \\
\bottomrule
\end{tabular}
}

\medskip\footnotesize
$\dagger$\, PCG reaches max 500 iterations only in the first few inverse iterations, which does not affect the final accuracy since PCG converges quickly in the last few inverse iterations.  
\end{table}

Figure~\ref{fig:coulomb4d_slices} shows slices of the 4D ground state
wavefunction for $\delta = 0.01$ and $0.0001$,
interpolated onto a fine grid via cell-by-cell
$Q^{10}$ polynomial reconstruction.
As $\delta$ decreases, the Coulomb singularity sharpens:
the 1D slice narrows, and the inter-particle correlation plot
develops a more pronounced dip along $x_{1a} = x_{2a}$.

\begin{figure}[ht]\centering
\includegraphics[width=\textwidth]{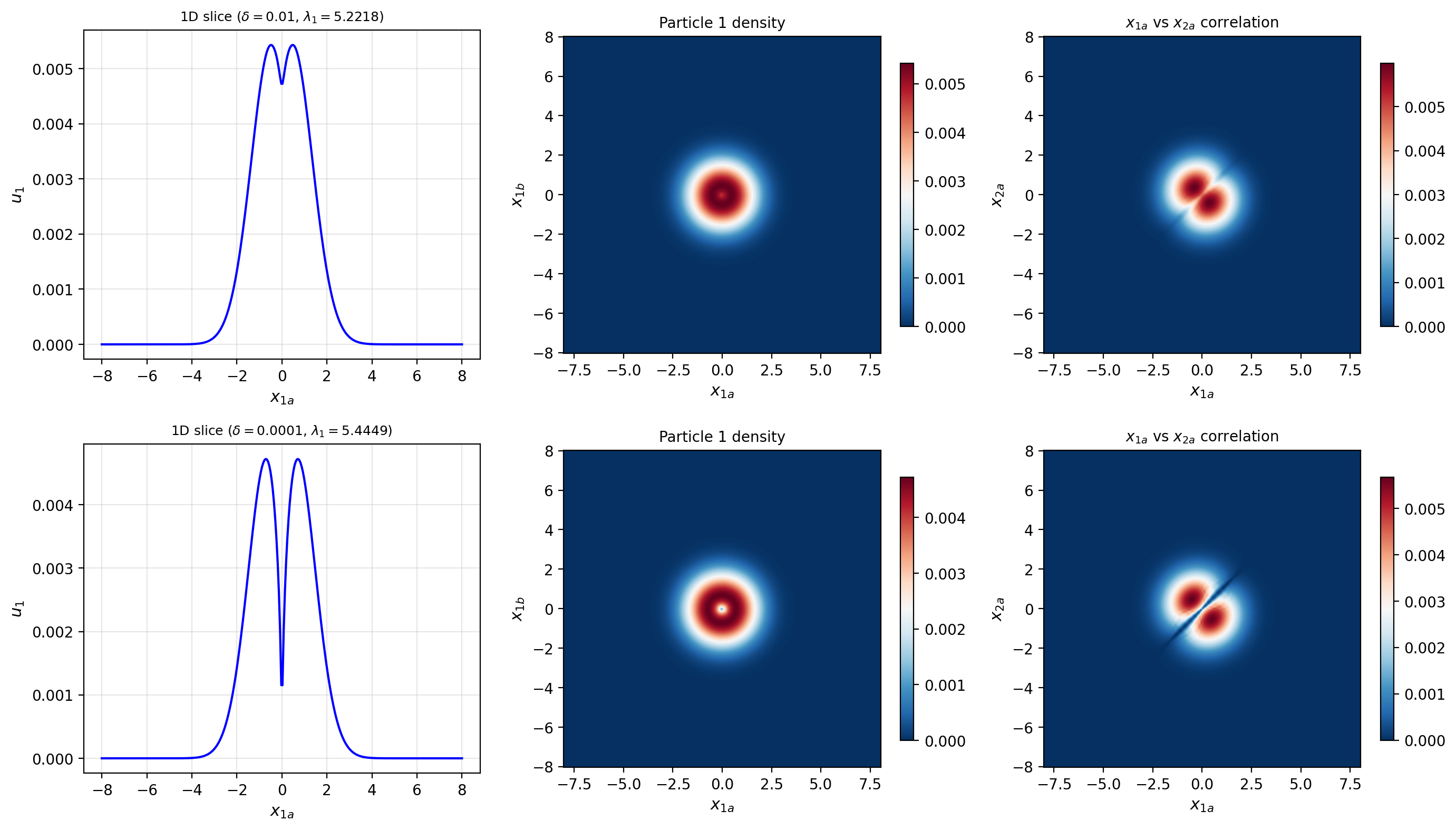}
\caption{Slices of the 4D Coulomb ground state for
$\delta = 0.01$ (top) and $\delta = 0.0001$ (bottom),
$Q^{10}$ SEM, $99^4$ DoFs.
Top: 1D slice along $x_{1a}$.
Middle: particle~1 density in the $(x_{1a}, x_{1b})$ plane.
Bottom: $x_{1a}$ vs.\ $x_{2a}$ inter-particle correlation.
The Coulomb repulsion strengthens as $\delta \to 0$.
Plotted on a $300 \times 300$ fine grid via cell-by-cell
$Q^{10}$ polynomial interpolation.}
\label{fig:coulomb4d_slices}
\end{figure}

\subsubsection{Splitting convergence}
 
As in Section~\ref{sec:ham_separable}, ``qHOP'' and ``Magnus-2''
refer to the approximated versions~\eqref{eq:qhop_product}
and~\eqref{eq:yoshida}.
All 4D splitting tests use $Q^{10}$ SEM, $99^4$ DoFs,
complex128, $T = 0.1$, GH200.

\begin{example}[Order of approximated qHOP convergence in 4D, $\delta = 0.001$]
\label{ex:coulomb_qhop_d001}
Table~\ref{tab:coulomb_qhop_d001} reports qHOP with $M = 1$ (Strang), $3$, $5$, $7$
for the Coulomb system with $\delta = 0.001$ ($V_{\rm C}^{\max} = 1000$).
All methods show $O(\Delta t^2)$ convergence in the asymptotic regime
($\Delta t \le 0.001$).
Larger $M$ reduces the error constant:
at $\Delta t = 0.0005$, $M{=}3$, $5$, $7$ reduce the error by
${\sim}14{\times}$, $35{\times}$, $65{\times}$ relative to Strang.
\end{example}

\begin{table}[ht]\centering
\caption{qHOP splitting error
$\|\psi_{\rm split}(T){-}\psi_{\rm exact}(T)\|_2$
on GH200,
$Q^{10}$ SEM, $99^4$ DoFs, complex128, $T{=}0.1$,
Coulomb $\delta = 0.001$. $t$\,(s) is the GPU time.}
\label{tab:coulomb_qhop_d001}
{\small
\begin{tabular*}{\textwidth}{@{\extracolsep{\fill}}rrrrrrrrrrrrrr@{}}
\toprule
& \multicolumn{3}{c}{$M=1$ (Strang)} & \multicolumn{3}{c}{$M=3$} & \multicolumn{3}{c}{$M=5$} & \multicolumn{3}{c}{$M=7$} \\
\cmidrule(lr){2-4}\cmidrule(lr){5-7}\cmidrule(lr){8-10}\cmidrule(lr){11-13}
$\Delta t$ & Error & Rate & $t$ (s) & Error & Rate & $t$ (s) & Error & Rate & $t$ (s) & Error & Rate & $t$ (s) \\
\midrule
0.005  & 3.32e-1 & --   & 1.8 & 4.93e-2 & --   & 4.2 & 4.27e-2 & --   & 5.9 & 2.96e-2 & --   & 8.0 \\
0.002  & 5.21e-2 & 2.02 & 3.3 & 2.51e-2 & 0.74 & 8.5 & 1.26e-3 & 3.84 & 13  & 5.18e-4 & 4.42 & 18  \\
0.001  & 3.75e-2 & 0.47 & 5.9 & 1.33e-3 & 4.24 & 16  & 2.34e-4 & 2.43 & 26  & 1.25e-4 & 2.05 & 36  \\
0.0005 & 1.98e-3 & 4.24 & 11  & 1.41e-4 & 3.24 & 31  & 5.64e-5 & 2.05 & 51  & 3.03e-5 & 2.04 & 71  \\
\bottomrule
\end{tabular*}
}
\end{table}

\begin{example}[Order of approximated Magnus-2 in 4D, $\delta = 0.1$]
\label{ex:coulomb_magnus2_d01}
Table~\ref{tab:coulomb_magnus2_d01} shows the convergence of Magnus-2 via Yoshida with $M = 1$ (plain Yoshida),
$3$, $5$, $7$ for the Coulomb system with $\delta = 0.1$
($V_{\rm C}^{\max} = 10$).
For $M = 7$, the error reaches $2.5 \times 10^{-10}$ at $\Delta t = 0.001$.
Larger $M$   reduces the error constant and
enters the high-order asymptotic regime at larger $\Delta t$.
\end{example}

\begin{table}[ht]\centering
\caption{Magnus-2 via Yoshida splitting error,
$99^4$ DoFs, complex128, $T{=}0.1$,
Coulomb $\delta = 0.1$. $t$\,(s) is the GPU time on GH200.}
\label{tab:coulomb_magnus2_d01}
{\small
\begin{tabular*}{\textwidth}{@{\extracolsep{\fill}}rrrrrrrrrrrrrr@{}}
\toprule
& \multicolumn{3}{c}{$M=1$ (Yoshida)} & \multicolumn{3}{c}{$M=3$} & \multicolumn{3}{c}{$M=5$} & \multicolumn{3}{c}{$M=7$} \\
\cmidrule(lr){2-4}\cmidrule(lr){5-7}\cmidrule(lr){8-10}\cmidrule(lr){11-13}
$\Delta t$ & Error & Rate & $t$ (s) & Error & Rate & $t$ (s) & Error & Rate & $t$ (s) & Error & Rate & $t$ (s) \\
\midrule
0.005 & 2.54e-3 & --   & 3.7 & 7.49e-4 & --   & 10 & 2.83e-4 & --   & 16 & 1.51e-4 & --    & 22  \\
0.002 & 5.82e-5 & 4.12 & 8.4 & 2.38e-5 & 3.76 & 24 & 5.36e-6 & 4.33 & 39 & 5.52e-7 & 6.12  & 53  \\
0.001 & 8.39e-6 & 2.79 & 16  & 1.61e-6 & 3.89 & 46 & 3.74e-8 & 7.16 & 77 & 2.54e-10 & 11.09 & 106 \\
\bottomrule
\end{tabular*}
}
\end{table}

\subsection{Two particles in 3D with Coulomb interaction (6D)}
\label{sec:coulomb6d}

We extend the 4D test to two particles in 3D
($\boldsymbol{x}_1, \boldsymbol{x}_2 \in \R^3$).
The Hamiltonian is
\begin{equation}\label{eq:coulomb6d}
  H = -\Delta_{\boldsymbol{x}_1} - \Delta_{\boldsymbol{x}_2}
    + V_{\mathrm{trap}}(\boldsymbol{x}_1) + V_{\mathrm{trap}}(\boldsymbol{x}_2)
    + \frac{c}{\sqrt{|\boldsymbol{x}_1 - \boldsymbol{x}_2|^2 + \delta^2}},
\end{equation}
discretized on $[-L,L]^3 \times [-L,L]^3$ with $Q^{10}$ SEM, $29^6 \approx 5.9 \times 10^8$ DoFs.
The splitting is $A = -\Delta$ (6D Laplacian),
$B = V_{\mathrm{trap}} + V_{\mathrm{Coulomb}}$,
and the reference solution is
$\psi_{\mathrm{exact}}(\boldsymbol{x}, t) = e^{-i\lambda_1 t}\,u_1(\boldsymbol{x})$
where $(\lambda_1, u_1)$ is the ground state of $H$.

\subsubsection{Ground state}

In these TF32 runs, the offline eigendecomposition of
$-\Delta + V_{\mathrm{trap}}$ is still computed in FP64 and then cast to
lower precision for  PCG solves. See Table \ref{tab:coulomb6d_ground_tf32} for the performance. The solution is plotted in Figure~\ref{fig:coulomb6d_tf32}.

\begin{figure}[ht]\centering
\includegraphics[width=\textwidth]{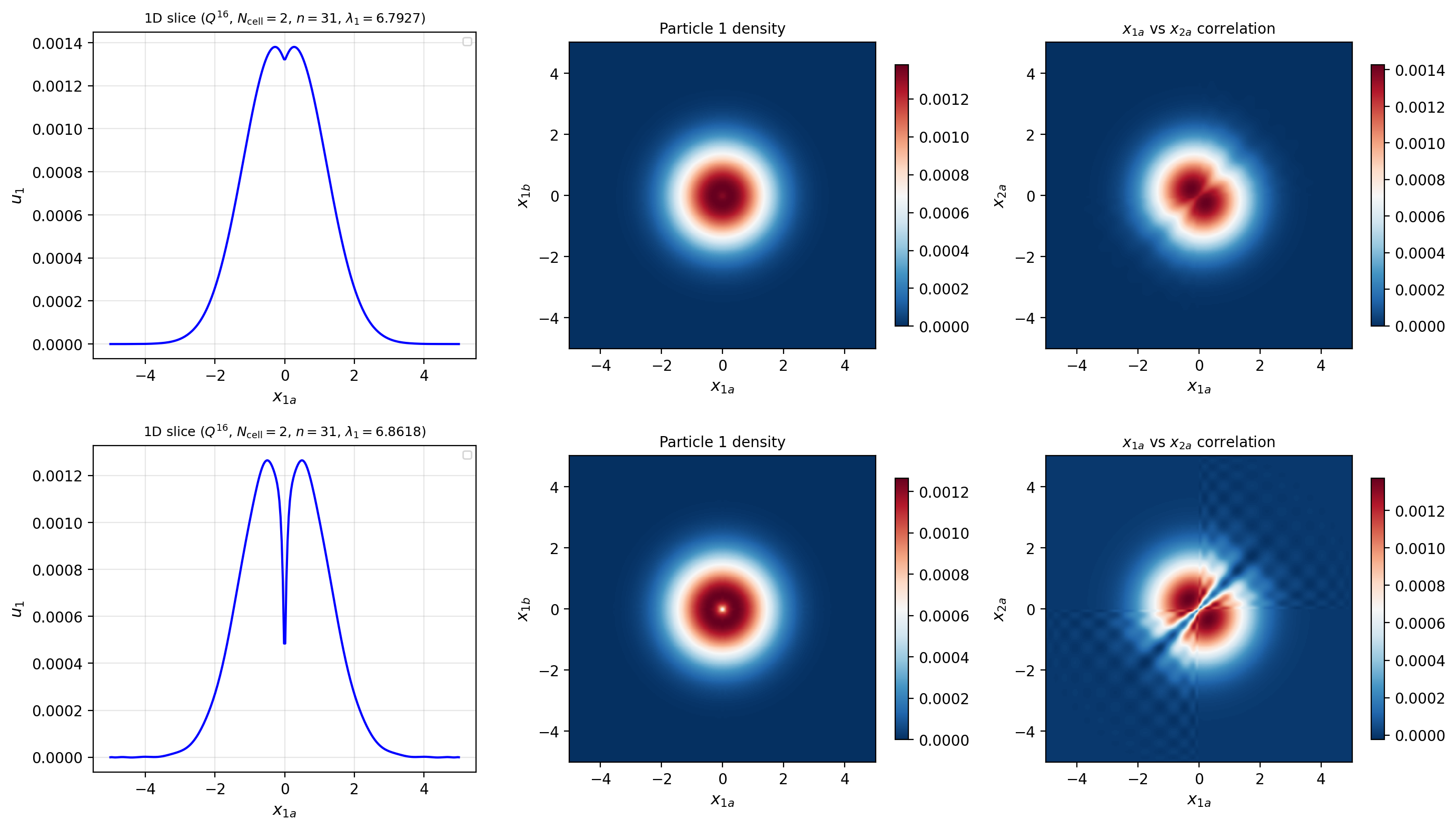}
\caption{Slices of the 6D Coulomb ground state at $31^6 \approx 8.9 \times 10^8$ DoFs,
$Q^{16}$ SEM, $N_{\rm cell} = 2$, TF32, $L = 5$, GH200.
Top: $\delta = 0.01$. Bottom: $\delta = 0.0001$.
Left: 1D slice along $x_{1a}$.
Center: particle~1 density.
Right: inter-particle correlation $x_{1a}$ vs.\ $x_{2a}$.
Plotted on a $300 \times 300$ fine grid via cell-by-cell
$Q^{16}$ polynomial interpolation.}
\label{fig:coulomb6d_tf32}
\end{figure}

\begin{table}[ht]\centering
\caption{Ground state of the 6D Coulomb Hamiltonian~\eqref{eq:coulomb6d}, $Q^{16}$ SEM, $N_{\rm cell} = 2$, $n = 31$,
$31^6 \approx 8.9 \times 10^8$ DoFs, TF32, $L = 5$, GH200.
Shifted inverse iteration with $\sigma = 0$,
PCG tolerance $10^{-4}$.}
\label{tab:coulomb6d_ground_tf32}
{\small
\begin{tabular}{@{}rrrrrr@{}}
\toprule
$\delta$ & $V_{\rm C}^{\max}$ & Inv.\ iters & Total PCG solves & Time (s) & $\lambda_1$ \\
\midrule
0.01   & 100   & 12 &  31 &  17 & 6.792689 \\
0.001  & 1000  & 12 &  99 &  34 & 6.844492 \\
0.0001 & 10000 & 12 & 331 &  94 & 6.861799 \\
\bottomrule
\end{tabular}
}
\end{table}

\begin{table}[ht]\centering
\caption{qHOP splitting error for the 6D Coulomb
system~\eqref{eq:coulomb6d}, $\delta = 0.01$,
$Q^{10}$ SEM, $29^6$ DoFs, complex64, $T{=}0.1$, GH200. $t$\,(s) is the GPU time.}
\label{tab:coulomb6d_qhop}
{\small
\begin{tabular*}{\textwidth}{@{\extracolsep{\fill}}rrrrrrrrrrrrrr@{}}
\toprule
& \multicolumn{3}{c}{$M=1$ (Strang)} & \multicolumn{3}{c}{$M=3$} & \multicolumn{3}{c}{$M=5$} & \multicolumn{3}{c}{$M=7$} \\
\cmidrule(lr){2-4}\cmidrule(lr){5-7}\cmidrule(lr){8-10}\cmidrule(lr){11-13}
$\Delta t$ & Error & Rate & $t$ (s) & Error & Rate & $t$ (s) & Error & Rate & $t$ (s) & Error & Rate & $t$ (s) \\
\midrule
0.1   & 1.98e-1 & --   & 1.5 & 1.29e-1 & --   & 2.2 & 1.75e-2 & --   & 3.5 & 8.34e-3 & --   & 3.6 \\
0.02  & 9.76e-3 & 1.87 & 2.9 & 1.22e-3 & 2.90 & 5.8 & 4.87e-4 & 2.23 & 8.7 & 2.61e-4 & 2.15 & 12  \\
0.005 & 5.51e-4 & 2.07 & 8.0 & 7.43e-5 & 2.02 & 19  & 3.00e-5 & 2.01 & 31  & 1.62e-5 & 2.01 & 43  \\
\bottomrule
\end{tabular*}
}
\end{table}

\subsubsection{Splitting convergence}

\begin{example}[Order of approximated qHOP in 6D, $\delta = 0.01$]
\label{ex:coulomb6d_qhop}
Table~\ref{tab:coulomb6d_qhop} reports qHOP with $M = 1$ (Strang), $3$, $5$, $7$
for the 6D Coulomb system with $\delta = 0.01$.
All methods show $O(\Delta t^2)$ convergence.
Larger $M$ reduces the error constant by
${\sim}7{\times}$ ($M{=}3$), $15{\times}$ ($M{=}5$),
$18{\times}$ ($M{=}7$) relative to Strang.
The splitting tests use complex64 arithmetic to fit the $29^6$
arrays in GPU memory, and the reference ground state is computed in FP64.
\end{example}

\begin{example}[Order of approximated Magnus-2 in 6D, $\delta = 0.01$]
\label{ex:coulomb6d_magnus2}
Table~\ref{tab:coulomb6d_magnus2} reports Magnus-2 via Yoshida with $M = 1$, $3$, $5$, $7$.
At large $\Delta t$, larger $M$ reduces the error:
at $\Delta t = 0.01$, the errors range from $7.93 \times 10^{-4}$ ($M{=}1$)
to $3.28 \times 10^{-6}$ ($M{=}7$).
At smaller $\Delta t$, the errors plateau at ${\sim}2.5 \times 10^{-6}$
due to the complex64 arithmetic floor: with 20--50 time steps, single-precision
rounding errors accumulate to $O(10^{-6})$. 
\end{example}

\begin{table}[ht]\centering
\caption{Magnus-2 via Yoshida splitting error for the
6D Coulomb system~\eqref{eq:coulomb6d}, $\delta = 0.01$,
$Q^{10}$ SEM, $29^6$ DoFs, complex64, $T{=}0.1$, GH200. $t$\,(s) is the GPU time.}
\label{tab:coulomb6d_magnus2}
{\small
\begin{tabular*}{\textwidth}{@{\extracolsep{\fill}}rrrrrrrrrrrrrr@{}}
\toprule
& \multicolumn{3}{c}{$M=1$ (Yoshida)} & \multicolumn{3}{c}{$M=3$} & \multicolumn{3}{c}{$M=5$} & \multicolumn{3}{c}{$M=7$} \\
\cmidrule(lr){2-4}\cmidrule(lr){5-7}\cmidrule(lr){8-10}\cmidrule(lr){11-13}
$\Delta t$ & Error & Rate & $t$ (s) & Error & Rate & $t$ (s) & Error & Rate & $t$ (s) & Error & Rate & $t$ (s) \\
\midrule
0.1   & 2.43e-1 & --   & 2.3 & 1.78e-1 & --   & 4.4 & 2.52e-1 & --   & 6.0 & 2.72e-2 & --    & 7.7  \\
0.01  & 7.93e-4 & 2.49 & 12  & 1.55e-5 & 4.06 & 28  & 3.29e-6 & 4.88 & 46  & 1.99e-6 & 4.14  & 65   \\
0.005 & 5.22e-5 & 3.93 & 22  & 2.34e-6$^\dagger$ & 2.73 & 55  & 1.66e-6$^\dagger$ & 0.99 & 91  & 2.77e-6$^\dagger$ & $-0.48$ & 128 \\
\bottomrule
\end{tabular*}
}

\medskip\footnotesize
$^\dagger$Error dominated by complex64 rounding accumulation.
\end{table}

\subsection{Three particles in 3D with Coulomb interaction (9D)}
\label{sec:fewbody}

We consider three particles in 3D
($\boldsymbol{x}_1, \boldsymbol{x}_2, \boldsymbol{x}_3 \in \R^3$)
in a harmonic trap with pairwise soft Coulomb interaction:
\begin{equation}\label{eq:fewbody}
  H = \sum_{j=1}^{3}\bigl(-\Delta_{\boldsymbol{x}_j}
      + V_{\mathrm{trap}}(\boldsymbol{x}_j)\bigr)
    + \sum_{1 \le j < k \le 3}
      \frac{c}{\sqrt{|\boldsymbol{x}_j - \boldsymbol{x}_k|^2 + \delta^2}},
\end{equation}
where $V_{\mathrm{trap}}(\boldsymbol{x}) = |\boldsymbol{x}|^2$.
Discretized on $[-L,L]^3 \times [-L,L]^3 \times [-L,L]^3$ with
$Q^k$ SEM, this gives $n^9$ DoFs.

\subsubsection{Ground state}

The offline eigendecomposition used in the separable preconditioner is
computed in FP64, even though the online inverse iteration and PCG
solves are carried out in FP32.
The ground state is computed by shifted inverse iteration with
$\sigma = 0$ and warm-start PCG
(preconditioner $\bigl(\sum_j(-\Delta_{\boldsymbol{x}_j} + V_{\mathrm{trap}})\bigr)^{-1}$,
tolerance $10^{-4}$ with max $500$ iterations).
Warm-start uses the previous outer iteration's PCG solution as the
initial guess for the next solve. 
Table~\ref{tab:fewbody_coulomb} shows the ground state cost
for $L = 3$, $c = 1$, $Q^5$ SEM, $N_{\rm cell} = 2$, $n = 9$
($9^9 \approx 3.9 \times 10^8$ DoFs) on GH200.
PCG iterations increase with  smaller $\delta$,
but all cases converge within 18 inverse iterations.

\begin{table}[ht]\centering
\caption{Ground state of the 9D Coulomb
Hamiltonian~\eqref{eq:fewbody},
3 particles in 3D,
$Q^5$ SEM, $N_{\rm cell} = 2$, $9^9 \approx 3.9 \times 10^8$ DoFs,
FP32, $\sigma = 0$, $L = 3$, GH200.}
\label{tab:fewbody_coulomb}
{\small
\begin{tabular}{@{}rrrrrr@{}}
\toprule
$\delta$ & $V_{\rm C}^{\max}$ & Inv.\ iters & PCG solves & Time (s) & $\lambda_1$ \\
\midrule
0.01   & 100   & 18 & 110 & 490 & 11.7561 \\
0.001  & 1000  & 18 & 349 & 588 & 11.9268 \\
0.0001 & 10000 & 18 & 708 & 714 & 11.9502 \\
\bottomrule
\end{tabular}
}
\end{table}

Figure~\ref{fig:fewbody_coulomb} shows slices of the ground state
for two values of $\delta$.
As $\delta$ decreases, the Coulomb repulsion sharpens and the
inter-particle anti-correlation pattern along $x_{1a} = x_{2a}$
becomes more pronounced. 

\begin{figure}[ht]\centering
\includegraphics[width=\textwidth]{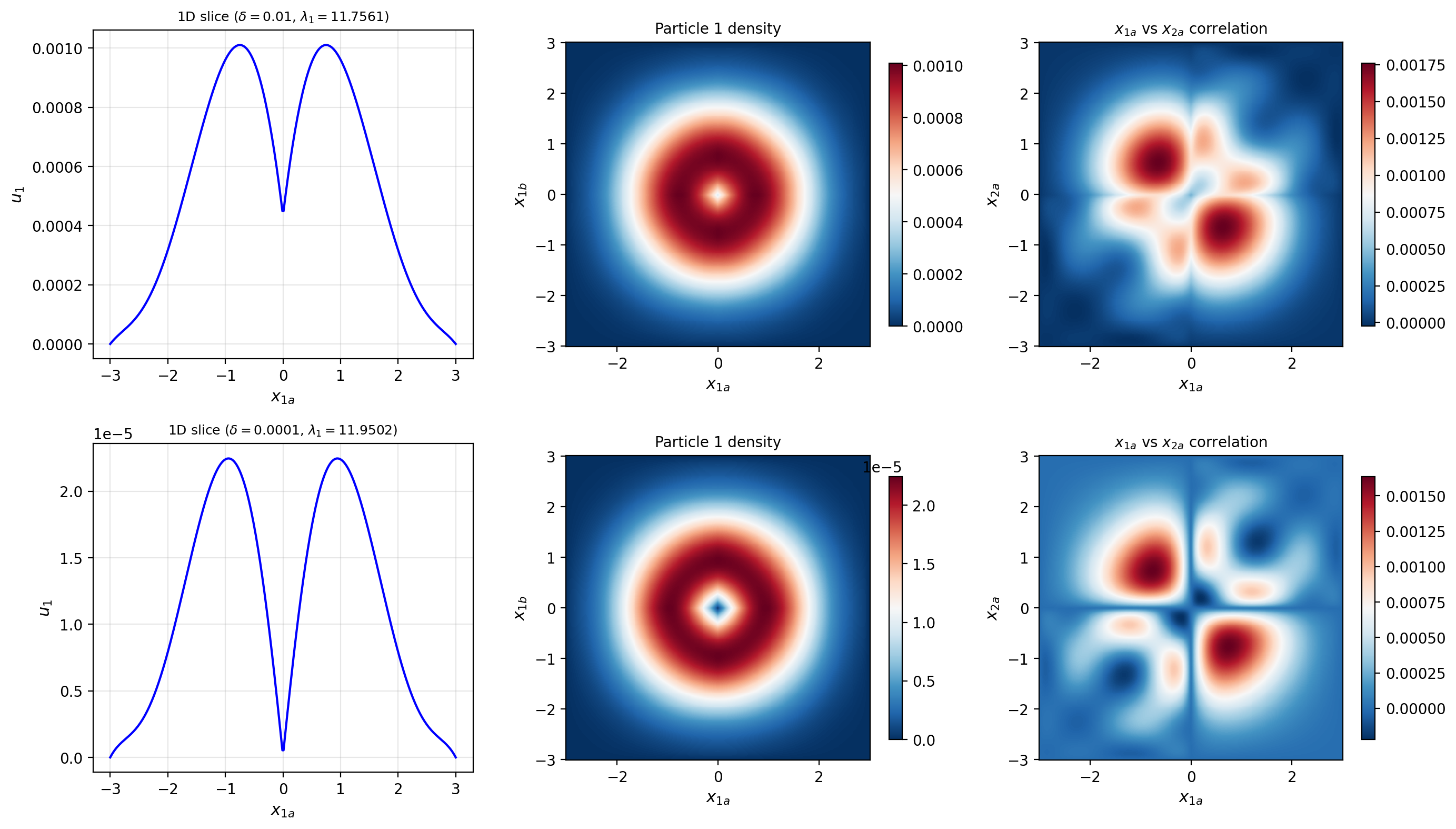}
\caption{Slices of the 9D Coulomb ground state,
3 particles in 3D, $Q^5$ SEM, $9^9$ DoFs, FP32, $L = 3$, GH200.
Top: $\delta = 0.01$. Bottom: $\delta = 0.0001$.
Left: 1D slice along $x_{1a}$.
Center: particle~1 density in the $(x_{1a}, x_{1b})$ plane.
Right: $x_{1a}$ vs.\ $x_{2a}$ inter-particle correlation.
Plotted on a $300 \times 300$ fine grid via cell-by-cell
$Q^5$ polynomial interpolation.}
\label{fig:fewbody_coulomb}
\end{figure}

\subsubsection{Splitting convergence} 
The splitting $A = \sum_j(-\Delta_{\boldsymbol{x}_j} + V_{\mathrm{trap}})$,
$B = V_{12} + V_{13} + V_{23}$ is tested with $\delta = 0.1$,
$T = 0.1$, complex64 arrays, using $Q^5$ SEM with $N_{\mathrm{cell}} = 2$
($n = 9$, $9^9 \approx 3.9 \times 10^8$ DoFs) on GH200.
Errors of qHOP and Magnus-2 via Yoshida for $M = 1, 3, 5, 7$
are shown in Figure~\ref{fig:fewbody_coulomb_splitting}. 

\begin{figure}[ht]\centering
\begin{minipage}[t]{0.48\textwidth}\centering
\includegraphics[width=\textwidth]{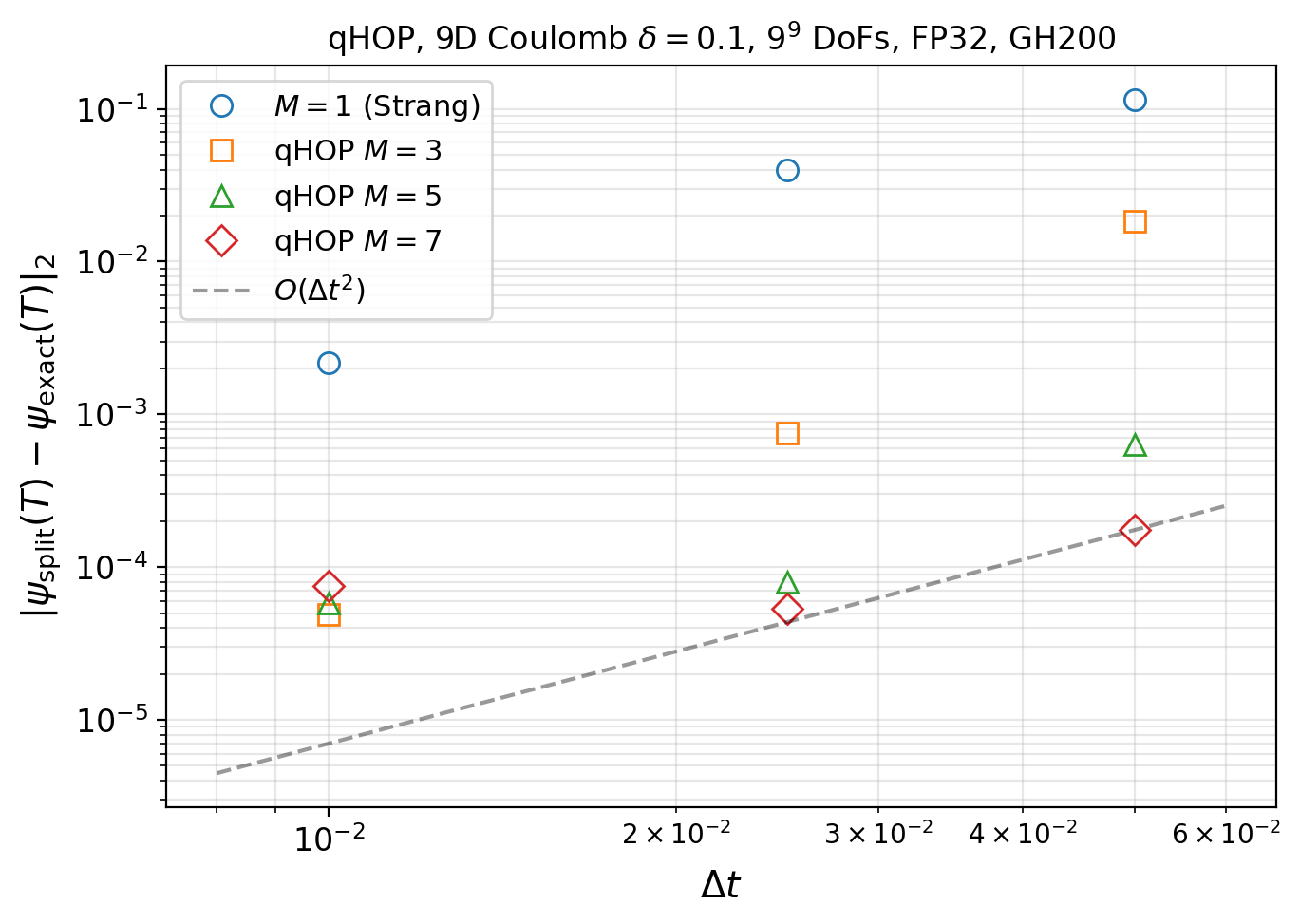}
\end{minipage}\hfill
\begin{minipage}[t]{0.48\textwidth}\centering
\includegraphics[width=\textwidth]{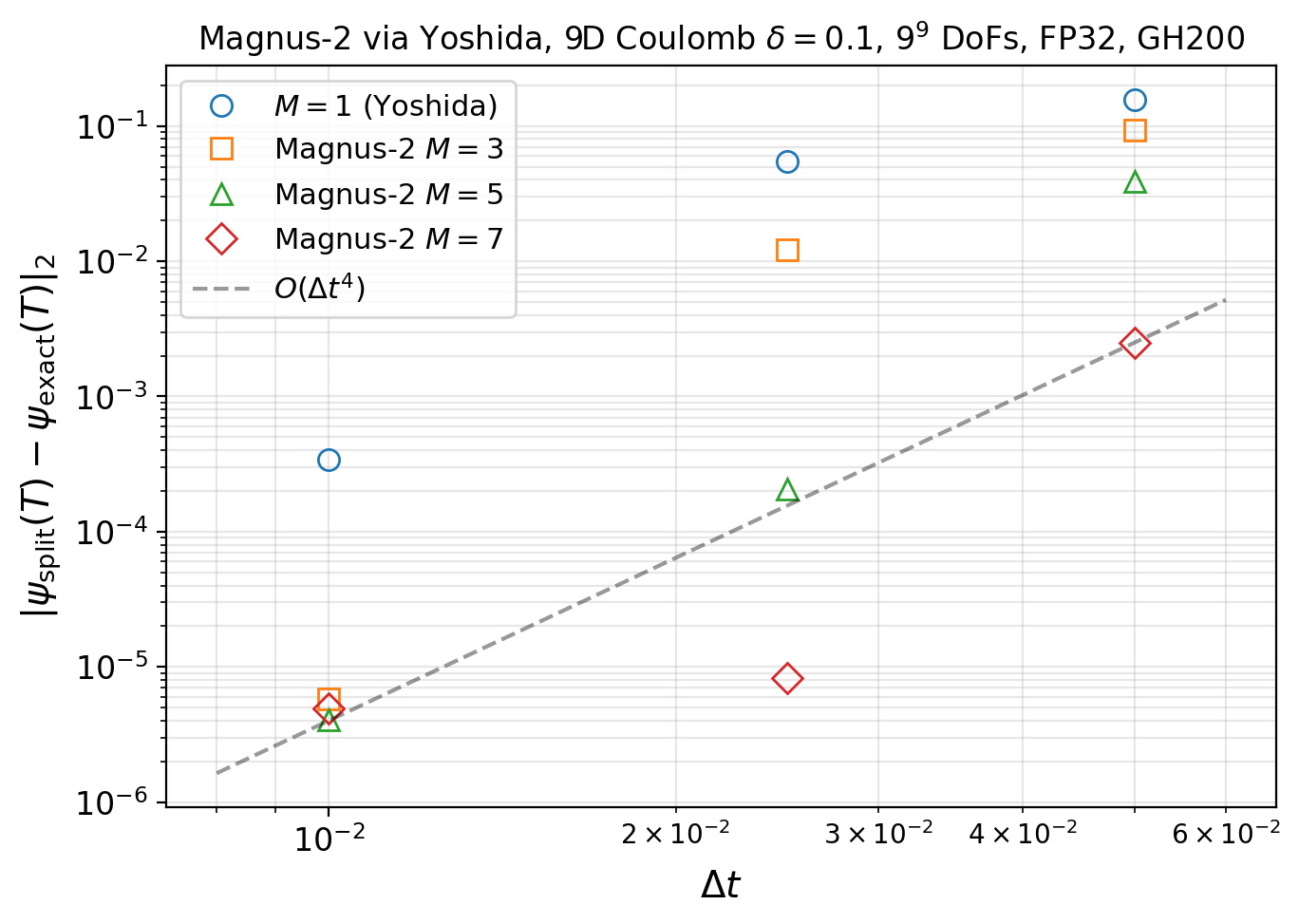}
\end{minipage}
\caption{Splitting convergence for 9D Coulomb, $\delta = 0.1$,
$9^9 \approx 3.9 \times 10^8$ DoFs, complex64, FP64 transforms,
$T = 0.1$, GH200.
Left: qHOP ($O(\Delta t^2)$).
Right: Magnus-2 via Yoshida ($O(\Delta t^4)$).
Larger $M$ reduces the error constant, and all methods hit the
complex64 floor at ${\sim}5 \times 10^{-6}$.}
\label{fig:fewbody_coulomb_splitting}
\end{figure}

\section{Concluding remarks}
\label{sec:conclusion}

We extended the tensor-product solver of~\cite{ZhangShenLiu}
from the Laplacian to $-\Delta + V$ on a single GPU, handling up to
$10^9$ DoFs in under one second.
For non-separable potentials $V = V_1 + V_2$, the preconditioner
$(-\Delta + V_1)^{-1}$ yields a preconditioned operator with 
a bounded condition number and spectrum clustering near~$1$, independently of the mesh size
(Theorem~\ref{thm:pcg}), and also independently of the domain
size when $V_1$ is confining and $V_2$ is bounded
(Theorem~\ref{cor:confining}).
We applied this framework to ground state computation via shifted
inverse iteration, PCG, and Gross--Pitaevskii gradient flows,
and used the resulting eigenpairs as exact stationary
solutions to validate the approximated versions of qHOP and Magnus-2 splitting methods
at ${\sim}10^8$ DoFs in 3D through 9D.
The product-formula implementation showed the expected
convergence orders  and also provides a family
of classical splitting methods generalizing second order Strang and fourth order Yoshida splittings.

\section*{Acknowledgments}
The authors are grateful to Prof. Antoine Levitt at Laboratoire de Math\'ematiques d'Orsay,
  Universit\'e Paris-Saclay and Prof. Jianfeng Lu at Duke University for discussions on the Schr\"odinger operator.
    This work was partially supported by NSF DMS-2208518.
This work used DeltaAI at the National Center for Supercomputing
Applications (NCSA) through allocation MTH260013 from the Advanced
Cyberinfrastructure Coordination Ecosystem: Services \& Support
(ACCESS) program, which is supported by U.S.\ National Science
Foundation grants \#2138259, \#2138286, \#2138307, \#2137603,
and \#2138296.

\section*{Declaration of AI-assisted technologies}
All original ideas, as well as the direct solver and PCG codes, are attributed to the authors.
During the preparation of this work, the authors used Anthropic’s Claude Code to assist with mathematical discussions, further numerical implementation, and drafting of the manuscript. After using this tool, the authors carefully reviewed and edited the content as needed and take full responsibility for the final publication.
\appendix

\section{Hermite spectral method on unbounded domains}
\label{app:hermite}

The tensor-product solver of Section~\ref{sec:solver} applies to
\emph{any} discretization that produces a separable 1D operator
$K = T\Lambda T^{-1}$ per axis. To illustrate this generality, we present the Hermite spectral method, which replaces the SEM on $[-L,L]^d$ with Hermite functions on $\R^d$, eliminating domain truncation artifacts for quantum systems in confining potentials.
The $n$ Hermite-Gauss nodes $\{x_j\}$ are eigenvalues of the $n \times n$ symmetric tridiagonal matrix $J$ with $J_{k,k} = 0$ and $J_{k,k+1} = J_{k+1,k} = \sqrt{k/2}$ for $k = 1, \ldots, n-1$. The Hermite function differentiation matrix at these nodes can be written as
\[
  D_{ij} = \frac{\psi_{n-1}(x_i)}
  {\psi_{n-1}(x_j)\,(x_i - x_j)}, \quad i \ne j,
  \qquad D_{ii} = 0,
\]
where $\psi_{n-1}$ is the normalized Hermite function of degree $n-1$. See~\cite[Eq.~(7.93)]{ShenTangWang2011}.
Given a separable potential $f(x)$, the 1D operator is
$K = -D^2 + \mathrm{diag}(f(x_j))$,
with eigendecomposition $K = T\Lambda T^{-1}$.
The $d$-dimensional tensor-product solver is identical to SEM.
No boundary conditions are needed, since the Hermite functions decay
as $e^{-x^2/2}$.
The method is limited to $n \lesssim 745$ in FP64 due to
underflow in the Hermite function recurrence at the outermost nodes.
Table~\ref{tab:hermite_accuracy} shows the accuracy of
$(-\Delta + V)^{-1}$ on $\R^3$ with the potential~\eqref{eq:test_potential}. 
Figure~\ref{fig:pcg_hermite} shows that the proposed preconditioner
$(-\Delta + V_1)^{-1}$ converges in 5 PCG iterations on $\R^3$,
while the baseline $P_C$ does not converge within 500 iterations.
Table~\ref{tab:magnus2_hermite} reports the Magnus-2 convergence
using the Hermite spectral method with separable
potential~\eqref{eq:separable_V1}, $499^3$ DoFs, $T = 1$.
For $M \ge 3$, the observed convergence is $O(\Delta t^4)$. 

\begin{table}[ht]\centering
\caption{Accuracy for using $(-\Delta+V)^{-1}$ to solve $(-\Delta + V)u=f$ with  potential~\eqref{eq:test_potential} on $\R^3$ via Hermite spectral
method, FP64, A100. The exact solution is $u^*= \frac{\sin\big(\tfrac{\pi}{2}(x+1)\big)}{1+x^2}\cdot\frac{\sin\big(\pi
  (y+1)\big)}{1+y^2}\cdot\frac{\sin\big(\tfrac{3\pi}{2}(z+1)\big)}{1+z^2}$.
}
\label{tab:hermite_accuracy}
{\small
\begin{tabular}{@{}rrrrrr@{}}
\toprule
$n$ & DoFs & Setup (s) & Solve (s) & Weighted $L^2$ err & $\ell^2$ rel.\ err \\
\midrule
199 & $199^3$ & 2.7 & 0.002 & $1.53\times10^{-5}$ & $6.31\times10^{-5}$  \\
399 & $399^3$ & 3.2 & 0.024 & $4.05\times10^{-6}$ & $2.20\times10^{-5}$  \\
599 & $599^3$ & 3.2 & 0.117 & $1.87\times10^{-6}$ & $1.20\times10^{-5}$  \\
\bottomrule
\end{tabular}
}
\end{table}

\begin{figure}[ht]\centering
\includegraphics[width=0.48\textwidth]{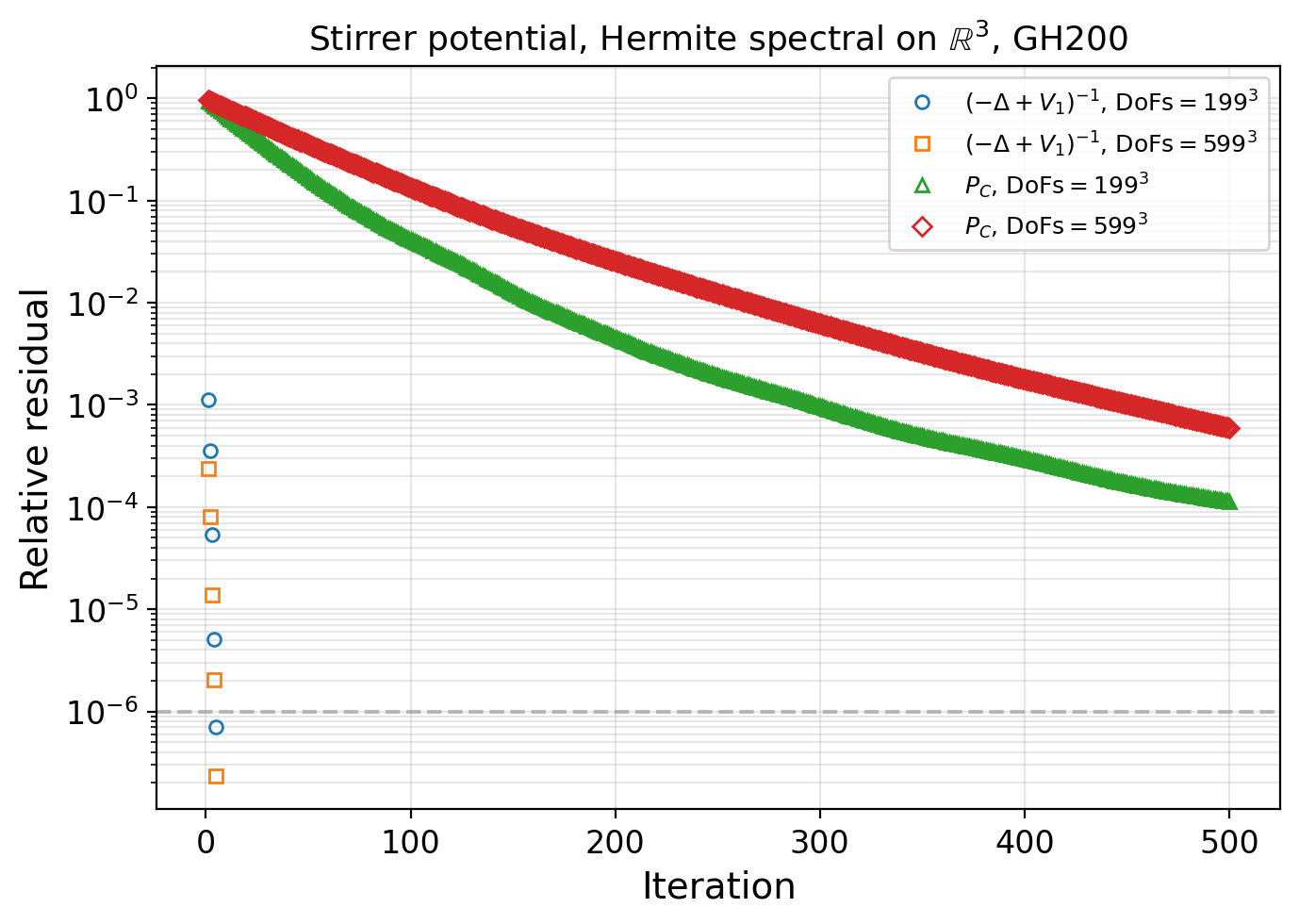}
\hfill
\includegraphics[width=0.48\textwidth]{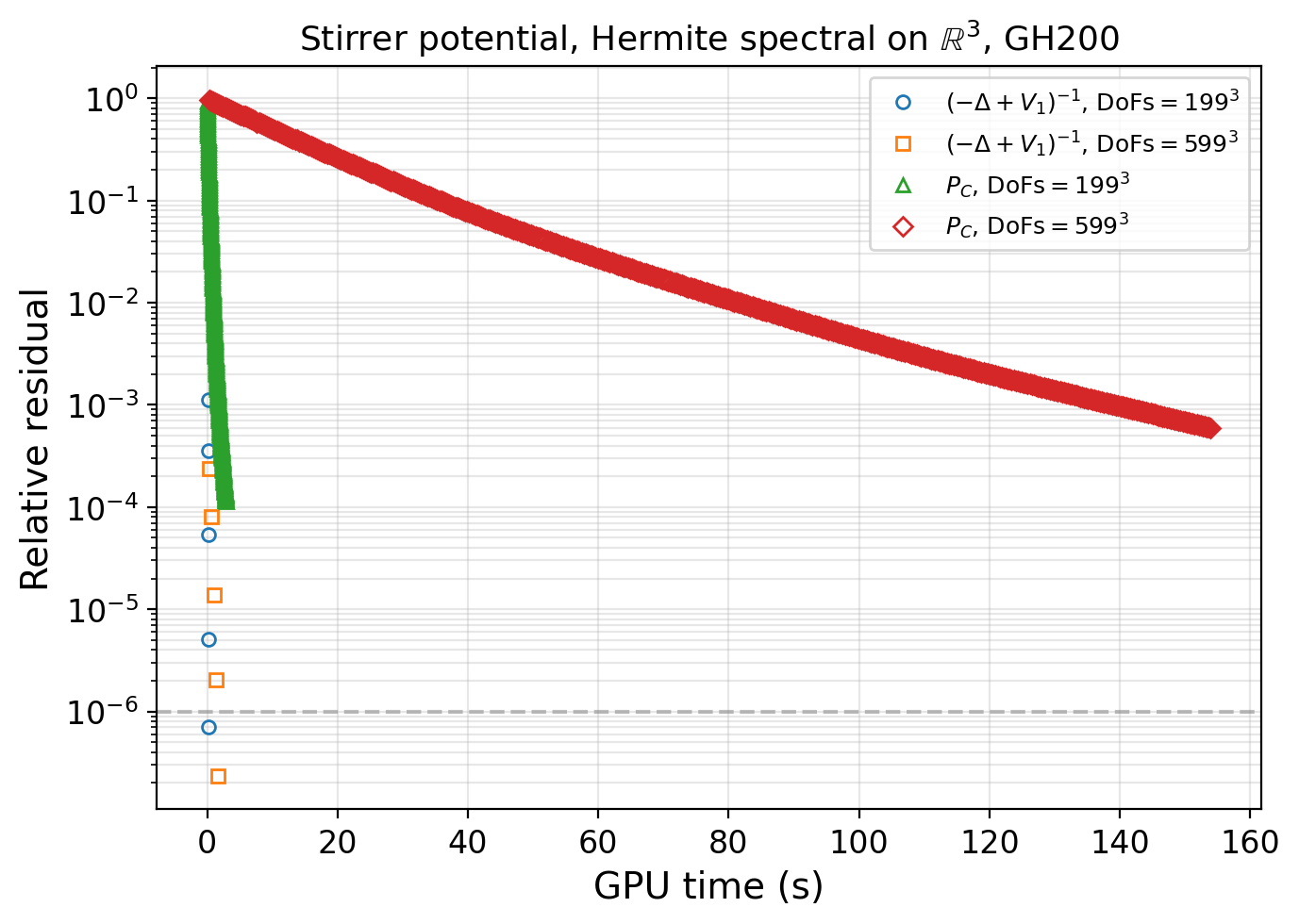}
\caption{PCG convergence with Hermite spectral method on $\R^3$.
Stirrer potential, FP32, GH200.}
\label{fig:pcg_hermite}
\end{figure}

\begin{table}[H]\centering
\caption{Approximated Magnus-2, Hermite spectral on $\R^3$,
$499^3$ DoFs, complex128, $T{=}1$, GH200. $t$\,(s) is the GPU time.}
\label{tab:magnus2_hermite}
{\small
\begin{tabular*}{\textwidth}{@{\extracolsep{\fill}}rrrrrrrrrrrrrr@{}}
\toprule
& \multicolumn{3}{c}{Yoshida ($M=1$)} & \multicolumn{3}{c}{Magnus-2 $M=3$} & \multicolumn{3}{c}{Magnus-2 $M=5$} & \multicolumn{3}{c}{Magnus-2 $M=7$} \\
\cmidrule(lr){2-4}\cmidrule(lr){5-7}\cmidrule(lr){8-10}\cmidrule(lr){11-13}
$\Delta t$ & Error & Rate & $t$ (s) & Error & Rate & $t$ (s) & Error & Rate & $t$ (s) & Error & Rate & $t$ (s) \\
\midrule
0.2   & 1.34e+0 & --    & 1.7  & 1.41e+0 & --   & 11   & 3.05e-1 & --   & 16   & 1.11e-1 & --   & 23 \\
0.125 & 1.21e+0 &  0.22 & 2.7  & 4.85e-1 & 2.27 & 17   & 3.59e-2 & 4.55 & 26   & 8.85e-3 & 5.38 & 36 \\
0.1   & 1.52e+0 & -1.02 & 3.4  & 1.11e-1 & 6.61 & 21   & 1.26e-2 & 4.69 & 33   & 3.51e-3 & 4.14 & 45 \\
0.05  & 5.48e-1 &  1.47 & 6.7  & 5.17e-3 & 4.42 & 42   & 7.60e-4 & 4.05 & 65   & 2.22e-4 & 3.98 & 91 \\
\bottomrule
\end{tabular*}
}
\end{table}

\bibliographystyle{siam}
\bibliography{refs}

\end{document}